\theoremstyle{definition} 
\newtheorem{Theorem}{Theorem}
\newtheorem{Lemma}[Theorem]{Lemma}
\newtheorem{Corollary}[Theorem]{Corollary}
\newtheorem{Claim}[Theorem]{Claim}
\newtheorem{Open}[Theorem]{Open Problem}
\newcommand{\N}{\mathbb{N}}
\newcommand{\diam}{diam}
\newcommand{\ch}{ch}
\newcommand{\Var}{Var}
\begin{document}

\begin{abstract}
We study a game of pursuit and evasion introduced by Seager in 2012, in which a cop searches the robber from outside the graph, using distance queries. A graph on which the cop wins is called locatable. In her original paper, Seager asked whether there exists a characterisation of the graph property of locatable graphs by either forbidden or forbidden induced subgraphs, both of which we answer in the negative. We then proceed to show that such a characterisation does exist for graphs of diameter at most $ 2 $, stating it explicitly, and note that this is not true for higher diameter. Exploring a different direction of topic, we also start research in the direction of colourability of locatable graphs, we also show that every locatable graph is $ 4 $-colourable, but not necessarily $ 3 $-colourable. 
\end{abstract}

\title{Subgraphs and Colourability of Locatable Graphs}

\author[mem]{Richard A. B. Johnson}

\author[cam]{Sebastian Koch}

\address[mem]{University of Memphis, Memphis TN, USA, rjhnsn25@memphis.edu}
\address[cam]{University of Cambridge, Cambridge, UK, sk629@cam.ac.uk}

\maketitle

\section{Introduction}
\label{SLCIntroductionSection}

Pursuit and evasion games were introduced by Parsons in 1976 \cite{Parsons76} when he considered the problem of locating a lost spelunker in a cave. As a worst-case scenario he considered the situation where the spelunker was actively trying to avoid his pursuers, giving rise to a natural cops and robbers analogy. This was first proposed independently by Nowakowski and Winkler \cite{NowaWink83} and Quilliot \cite{Quilliot78}. An instance of a cops and robbers game consists of a fixed graph $ G $ and a fixed set of players, the cops on one side and the robbers on the other. In the original version, played with one robber -- which is the set-up studied almost exclusively, apart from the short note \cite{HahnMacG06} -- the players first place themselves on the graph and then move alternatingly -- first the cops, followed by the robber. This is repeated until either some cop lands on the same vertex as the robber, resulting in a win for the cops, or, should the cops be unable to achieve this, indefinitely, resulting in a win for the robber. Many variants to the rules have since been proposed, for an excellent survey see \cite{FoThi08}, \cite{Alspach04} as well as the 2011 book `The Game of Cops and Robbers on Graphs' \cite{BookBN}. \\

For a given graph $ G $, the \emph{cop number} $ c(G) $ denotes the minimum number of cops required to catch a robber on $ G $, whereas the \emph{k-capture time} $ capt_k(G) $ denotes the time that $ k $ cops require to capture the robber on $ G $. While the first is a concept that has been studied from the beginning of the existence of the subject, the latter has only recently been introduced in \cite{BGHK09}. The first results on the cop number include the characterisation of graphs of cop number $ 1 $, achieved by Nowakowski and Winkler \cite{NowaWink83}, as well as, among many others. The first results on capture time include results for graphs of cop number $ 1 $ in \cite{BGHK09} and \cite{Gavenciak10} and for grids \cite{Mehrabian10}. Possibly the most important open conjecture in this area is Meyniel's conjecture, claiming that the cop number of any graph on $ n $ vertices is at most $ O(\sqrt(n)) $. Results towards this conjecture have recently been obtained by Bollob\'as, Kun and Leader \cite{BKL13} and \L uczak and Pra\l at \cite{LP10} on random graphs, and by Lu and Peng \cite{LuPeng} as well as Scott and Sudakov \cite{ScoSud10} in the general setting.

In this paper we consider a probing variant of the game which originates from a paper by Seager \cite{Seager12} in 2012. In this 2-player game a robber is assumed to be hiding on a vertex of a simple graph, and a cop wishes to locate him. On the cop's turn she can probe vertices of the graph. When a vertex is probed, the robber truthfully declares the current graph distance from his current location to the probe vertex. Following this the robber makes a move along an edge or remains still, finishing the turn, following which this procedure is repeated. If the cop can successfully identify which vertex the robber is hiding on she wins, in which case we call such a graph \emph{locatable}. If the robber can indefinitely avoid capture then he wins, and we call such a graph \emph{non-locatable}, following the naming conventions in \cite{Seager12}. As the cop can win eventually with probability 1 on any finite graph against a robber who has no knowledge of her future moves, simply by probing random vertices until she hits the current location of the robber. This naturally leads to a different emphasis: we consider the question of whether the cop has a strategy which is guaranteed to win in bounded time, or equivalently whether she can catch an omniscient robber, which not only knows the cop's previous move, but also her following move, thus allowing him to react to it appropriately and avoiding being located if possible. In the variant that we consider we impose the following conditions. While the robber can only move along edges or remain at his current position, the cop can choose any vertex to probe on each of her moves without restriction. Moreover we include the \emph{no-backtrack condition}, which specifies that the robber cannot move to the vertex that the cop has just probed. We note moreover that, following established convention, we assume the robber to be male and the cop to be female. \\

In her original paper \cite{Seager12} Seager showed that the cop wins on all cycles on at least 4 vertices, apart from $ C_5 $ where the robber wins, and on all trees. However she also showed that the robber wins on $ K_4 $, $ K_{3,3} $, and more generally on any graph that contains $ K_4 $ as a subgraph, or $ K_{3,3} $ as an induced subgraph. We will extend some of these results in section \ref{SCLForbiddenSubgraphsDiam2Section} and introduce the new name of \emph{hideout graphs} for graphs with this property. Moreover, Seager asked whether there exists a characterisation of the set of locatable graphs by forbidden subgraphs, which we will show in section \ref{SCLNoForbiddenCharacterisationSection} is not the case. \\

Carraher, Choi, Delcourt, Erickson and West \cite{CCDEW12}, also in 2012, considered the same game but without the no-backtrack condition, and showed that for any graph $ G $ there exists some $ m $, such that the subdivided graph $ G^\frac{1}{m} $ is locatable, or cop-win, as they call it. Let $ m(G) $ be the minimal such. They also obtained upper bounds on the value of $ m(G) $ both in the general case and specifically for complete bipartite graphs, and determined $ m(G) = 2 $ precisely for grids. Following this, Haslegrave and the authors in \cite{HJK14} recently proved the exact values of $ m(G) $ for complete and bipartite graphs, and showed that in these cases $ m(G) $ is a threshold value in the sense that $ G^\frac{1}{m} $ is locatable for every $ m \geq m(G) $. Whether this is the case in general is an open question. \\

The basic notation we use is mostly standard and follows \cite{BookBela}, the main slightly non-standard notation used possibly being the concept of the \emph{closed neighbourhood} $ N[A] = N(A) \cup A $ of a set $ A $, consisting of the union of its neighbourhood and the set itself. Additionally, using one of two similarly established notations, following \cite{BookBela} we will denote the path with $ k $ vertices by $ P_{k} $. We further note that we introduce 
the graph $ K_{2} \oplus E_{3} $ as the graph consisting of disjoint copies of an edge $ K_{2} $ and an indipendent $ 3 $-set $ E_{3} $ with all edges between them added. \\

The remainder of this paper is structured as follows: In section \ref{SCLNoForbiddenCharacterisationSection} we will show that there exists no forbidden subgraph or forbidden induced subgraph characterisation for locatable graphs. In section \ref{SCLForbiddenSubgraphsDiam2Section} we will show that for diameter $ 2 $ graphs on the opposite, such a characterisation exists, and we will present it. In section \ref{SCLColourabilitySection}, we will present the result that every locatable graph is $ 4 $-colourable, and in the final section \ref{SCLOpenProblemsSection} we will close by presenting some related open problems.

\section{Non-existence of Forbidden Subgraph Characterisations}
\label{SCLNoForbiddenCharacterisationSection}

In this section we will answer the question posed by Seager in \cite{Seager12} for the existence of a forbidden subgraph characterisation of the class of locatable graphs. We will show that neither a forbidden subgraph nor a forbidden induced subgraph characterisation exists, thus answering the question in the negative. 

We start by recalling a few basic definitions. A \emph{graph property} is a class of graphs that is closed under isomorphism. A graph property is called \emph{monotone} if it is closed under removal of edges and vertices, and it is called \emph{hereditary} if it is closed under removal of vertices. It is a well-known and easy to check fact that monotonicity (respectively hereditarity) of a graph property is equivalent to the existence of a forbidden (respectively forbidden induced) subgraph characterisation. We will achieve the results on the non-existence of such a characterisation by presenting an example of a non-locatable graph that is an induced subgraph of a locatable graph, thus establishing the following result.


\begin{Theorem}
\label{SCLNotHereditary}
The graph property of being locatable is not a hereditary property.
\end{Theorem}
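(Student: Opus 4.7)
The plan is to exhibit an explicit locatable graph $ G $ that contains a non-locatable graph $ H $ as an induced subgraph. Since Seager \cite{Seager12} already identified several non-locatable graphs, my first step is to select an appropriate $ H $: I would take $ H = K_{3,3} $, because its non-locatability is an induced-subgraph statement and it is easy to extend $ K_{3,3} $ without introducing any new edges inside the bipartition, so the inducedness half of the claim is secured automatically.

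Next, I would construct $ G $ by attaching new vertices to $ V(H) $, always insisting that every new edge has at least one endpoint outside $ V(H) $. This guarantees that the subgraph of $ G $ induced on $ V(H) $ is exactly $ H $. The purpose of the attached vertices is to play the role of \emph{beacons}: probing a beacon yields a distance signature that partitions $ V(G) $ into small level sets, and the beacons should be arranged so that every vertex of $ G $ has a unique signature with respect to the full beacon family. This is the engineering step that breaks the diameter-$2$ symmetry of $ K_{3,3} $ which is the very source of its non-locatability.

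The main obstacle is to verify that $ G $ is locatable, i.e.\ to exhibit a cop strategy on $ G $ and show that it catches an omniscient robber subject to the no-backtrack rule. The natural framework is to maintain at each stage a set $ S \subseteq V(G) $ of possible current positions of the robber, update $ S $ after a probe by intersecting with the level set of the returned distance, and update it after a robber move by replacing $ S $ with $ \left( N[S] \setminus \{v\} \right) $, where $ v $ is the last probed vertex. One then prescribes an explicit probing schedule, alternating beacon probes (which restrict $ S $ either to $ V(H) $ or to a small region outside) with targeted probes that finish locating him, and verifies by case analysis that $ |S|=1 $ is reached in a bounded number of steps. The subtle point is preventing the robber from exploiting the inner symmetry of $ H $ to shuttle indefinitely between indistinguishable positions; this is why the beacons must be arranged so that the no-backtrack restriction repeatedly forbids him from returning to the beacon that would otherwise re-equate his candidate vertices.

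Once $ G $ is shown to be locatable, Theorem \ref{SCLNotHereditary} follows immediately: $ H $ is non-locatable by \cite{Seager12}, $ G $ is locatable by construction, and $ H $ is an induced subgraph of $ G $, so the class of locatable graphs is not closed under vertex deletion.
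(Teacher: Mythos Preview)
Your approach has a fatal flaw in the very first step: the choice $H=K_{3,3}$ cannot work. Seager's result, which you cite, is precisely that \emph{every} graph containing $K_{3,3}$ as an induced subgraph is non-locatable. Hence any $G$ you build around $K_{3,3}$ with no new edges inside $V(H)$ will still contain an induced $K_{3,3}$ and will therefore be non-locatable, no matter how cleverly you place beacons. (In the terminology of this paper, $K_{3,3}$ is a \emph{weak hideout graph} --- in fact a full hideout graph by Lemma~\ref{SCLK_33Hideout} --- so the robber can win on $G$ simply by never leaving the copy of $K_{3,3}^{-}\subseteq K_{3,3}$.) The case analysis you sketch for the cop strategy on $G$ would necessarily fail somewhere, because a winning robber strategy exists.

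The real content of Theorem~\ref{SCLNotHereditary} is finding a non-locatable graph $H$ that is \emph{not} a weak hideout graph; this is exactly what makes the question nontrivial, since all the non-locatable graphs known from \cite{Seager12} ($K_4$, $K_{3,3}$, $C_5$) turn out to be hideouts. The paper's proof supplies a new example, the double-net (a triangle with two pendant edges at each corner), proves it non-locatable by an explicit robber strategy, and then exhibits a locatable supergraph --- the rooted double-net, obtained by adding a single vertex joined to two of the pendant leaves --- together with a concrete cop strategy. Your general framework (build $G\supseteq H$, track a candidate set $S$, design a probing schedule) is exactly right; what is missing is a viable $H$ to start from.
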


Clearly, Theorem \ref{SCLNotHereditary} implies 
the same statement for monotonicity. Thus, a forbidden subgraph characterisation for locatable graphs, as asked for by Seager, does not exist, neither for subgraphs nor for induced subgraphs. 

Let us first note that a similar result by example has been established for the robber locating game without the no-backtrack condition by the authors in \cite{HJK14}, where a locatable graph containing an induced $ C_{6} $ subgraph was presented, the latter being non-locatable in the game without no-backtrack. However, with the no-backtrack example $ C_{6} $ becomes locatable as shown in \cite{Seager12}, thus calling for a different example to establish the stronger Theorem \ref{SCLNotHereditary} and disprove the conjecture.

We now continue by giving 
the promised example to prove Theorem \ref{SCLNotHereditary}. We call this graph the \emph{double-net}, which is a triangle with two pending edges on each vertex. Denote by $ a $, $ b $ and $ c $ the vertices on the central triangle, $ T $, of this graph. This is a counterexample because there exist graphs containing it which are non-locatable. For example consider the \emph{rooted double-net}, which is a double net with an additional vertex $ r $ added, the root, which is then connected to the two leaves originating at one of the points of the triangle. We include an illustration of both these graphs in Figure \ref{SCLDoubleNetFigure}. The following lemma will imply the desired result.

\begin{figure}[ht] \centering
  \begin{tikzpicture}
	\tikzstyle{vertex}=[draw, shape=circle, minimum size=5pt, inner sep=0]
	\foreach \x/\y/\label in {-3.0/1.732/A1, -3.5/0.866/A2, -2.5/0.866/A3, -3.5/2.598/A4, -2.5/2.598/A5, -4.5/0.866/A6, -4.0/0.0/A7, -2.0/0.0/A8, -1.5/0.866/A9, 3.0/1.732/B1, 2.5/0.866/B2, 3.5/0.866/B3, 2.5/2.598/B4, 3.5/2.598/B5, 1.5/0.866/B6, 2.0/0.0/B7, 4.0/0.0/B8, 4.5/0.866/B9, 3.0/3.464/B10}
	{\node[vertex] (\label) at (\x, \y) {};
		}
	\foreach \from/\to in {A1/A2, A2/A3, A3/A1, A1/A4, A1/A5, A2/A6, A2/A7, A3/A8, A3/A9, B1/B2, B2/B3, B3/B1, B1/B4, B1/B5, B2/B6, B2/B7, B3/B8, B3/B9, B4/B10, B5/B10}
	{\draw (\from) -- (\to);
		}
	\node at (-3, -1) {Double-net};
	\node at (-3.3, 1.732) {a};
	\node at (-3.4, 0.566) {b};
	\node at (-2.6, 0.566) {c};
	\node at (3, -1) {Rooted double-net};
	\node at (2.7, 1.732) {a};
	\node at (2.6, 0.566) {b};
	\node at (3.4, 0.566) {c};
	\node at (2.7, 3.464) {r};
  \end{tikzpicture}
  \caption{Double-net and Rooted double-net}
  \label{SCLDoubleNetFigure}
\end{figure}
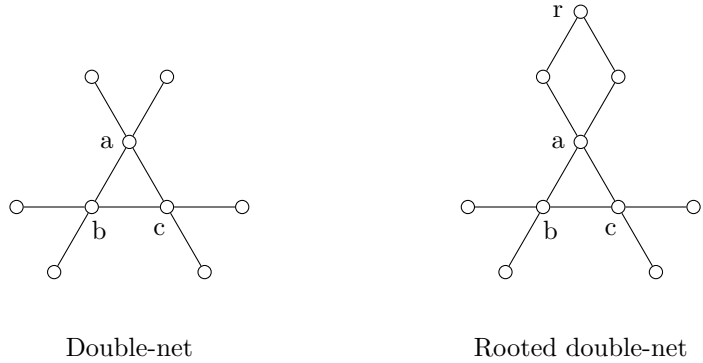


\begin{Lemma}
\label{SCLDoubleNet}
\raisebox{12pt}{
\begin{minipage}[t]{\linewidth}
\begin{enumerate}[leftmargin=*, label=(\roman*)]
\item The double-net is not locatable.\label{SCLDoubleNeti}
\item The rooted double-net is locatable.\label{SCLDoubleNetii}
\end{enumerate}
\end{minipage}}
\end{Lemma}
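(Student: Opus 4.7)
For (i), I would construct an explicit robber strategy guaranteeing $|S_t| \geq 2$ for every $t$, where $S_t$ denotes the cop's set of vertices consistent with the probes and responses through turn $t$. The key resource is the rich automorphism group of the double-net: $\mathrm{Aut}(G)$ has order $48$, generated by the $S_3$-action on the triangle $\{a,b,c\}$ together with the three commuting twin-swaps on the leaf-pairs $\{a_1,a_2\}$, $\{b_1,b_2\}$, $\{c_1,c_2\}$. The robber will maintain a ``phantom'' vertex $q_t \neq p_t$ obtained as $\phi_t(p_t)$ for some automorphism $\phi_t$ that fixes every probe $w_1,\ldots,w_t$; then $q_t \in S_t$ supplies the required second consistent vertex. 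The initial placement is always possible because the stabiliser of any single vertex has order at least $8$ and acts non-trivially on the rest of $V$. At subsequent turns the robber uses his omniscient knowledge of the cop's next probe $w_{t+1}$: if $w_{t+1}$ is already fixed by $\phi_t$, he stays put; otherwise he migrates through the central triangle to a leaf of an arm whose twin-swap remains in the stabiliser, updating $\phi_t$ accordingly.

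For (ii), I would exhibit an explicit cop strategy for the rooted double-net and verify it wins by case analysis. Probing $r$ first is decisive: the distances from $r$ partition $V$ into the five classes $\{r\}$, $\{a_1,a_2\}$, $\{a\}$, $\{b,c\}$, $\{b_1,b_2,c_1,c_2\}$, three of which are already singletons. Responses $0$ and $2$ give an immediate win. For response $1$, after the robber's only available moves the possible set is contained in $\{a, a_1, a_2\}$, and a second probe of $a_1$ assigns the three pairwise distinct distances $1$, $0$, $2$ to its elements. For responses $3$ and $4$ the cop follows up with probes of $a$ and then $b$, each of which splits the current possible set into pieces of size at most $2$; a concluding probe at the appropriate leaf, or a second probe of $r$ to separate the $\{a_1,a_2\}$ class from the $\{c_1,c_2\}$ class, forces a singleton in at most two further turns.

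The hard part of (i) will be showing that the robber's migration step is always safe: during the turn he is in transit through a central vertex, the cop could in principle pin him down by probing either the vertex he just left or the arm he is heading into. The argument should rely on the three-arm structure — destroying all three twin-swap symmetries requires the cop to probe at least one leaf of each arm, so at any fixed finite time at most two arms are compromised and the robber always has a third arm available as a safe destination. The hard part of (ii) will be the response-$3$ and response-$4$ branches, in which the possible set can temporarily grow back to size six or seven after a robber move; here the extra discriminating power of $r$ (five distance classes, versus the at most four achievable by any single probe in the double-net) is what drives the set back down, and the verification amounts to a short tree of explicit distance computations terminating at singletons in at most six rounds.
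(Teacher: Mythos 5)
Both halves of your proposal have genuine gaps. For part (i), the phantom-vertex bookkeeping requires an automorphism $\phi_t$ lying in the \emph{pointwise} stabiliser of all probes $w_1,\dots,w_t$, and this stabiliser becomes trivial very quickly: if the cop's first three probes are one leaf from each of the three arms, then any automorphism fixing all three must fix each triangle vertex (it must map the arm of each probed leaf to itself) and cannot use any twin-swap, so it is the identity. Your safety argument ``at any fixed finite time at most two arms are compromised'' is therefore false -- all three arms are compromised by turn $3$, the game has unbounded length, and from then on no admissible $\phi_t$ exists, so the invariant $|S_t|\geq 2$ is not established. (Note also that even while a twin-swap survives, it only produces a second consistent vertex when the robber actually sits on that arm's leaves.) The paper's robber strategy is not symmetry-based at all: it tracks explicit consistent sets ($T$, the sets $T_x$, and the sets $T_{x,y}$) and shows the robber can always move so that his consistent set again contains one of these; any repair of your argument essentially has to pass to such set-tracking.

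For part (ii), the cop strategy ``probe $r$ first'' founders exactly on the response-$3$ and response-$4$ branches, and the specific claims you make there are incorrect. After response $3$ the consistent set is $\{b,c\}$, which after the robber's move becomes $\{a,b,c,b_1,b_2,c_1,c_2\}$; probing $a$ and receiving $1$ leaves $\{b,c\}$, which grows back to $\{b,c,b_1,b_2,c_1,c_2\}$, and the follow-up probe at $b$ with answer $1$ leaves $\{c,b_1,b_2\}$ -- a class of size $3$, not ``pieces of size at most $2$'' -- which after the next move expands again to six vertices $\{a,c,c_1,c_2,b_1,b_2\}$. So the asserted termination ``in at most two further turns'' (indeed, in at most six rounds) is not verified and, along the probe sequence you name ($r,a,b$, then a leaf or $r$), does not hold. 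This is precisely why the paper's cop first probes the two leaves of $b$ and then $b$ itself: with $b,b_1,b_2$ eliminated, the subsequent probe at $r$ has distance classes $3$ and $4$ collapsing to $\{c\}$ and $\{c_1,c_2\}$, and the only ambiguous answers ($1$ and $4$) confine the robber to a geodesic path that one further probe resolves. Your opening probe at $r$ discards this preprocessing, and the resulting case tree around $b$ and $c$ is exactly where a careful analysis is still owed.
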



\begin{proof}
To prove part \ref{SCLDoubleNeti}, we describe a winning strategy for the robber on the double-net, as follows. After every probe and his subsequent move he will claim to be in one of the three following sets which we illustrate in Figure \ref{SCLDoubleNetRobberGoodFigure}:
\begin{enumerate}
\item The central triangle $ T $,
\item A set of two vertices on the triangle plus their respective leafs -- denote this by $ T_{x} $, where $ x \in \{ a, b, c \} $ is the vertex not contained in it,
\item A set denoted by $ T_{x,y} $ with $ x, y \in \{ a, b, c \} $ being distinct, this being the set containing all of the double-net but $ y $ and the two leaves at $ x $. 
\end{enumerate}

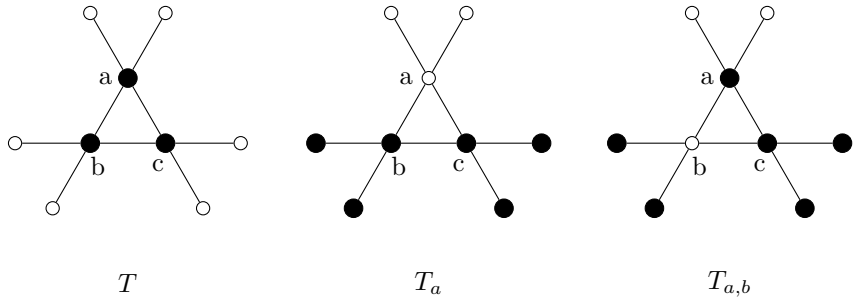
\begin{figure}[ht] \centering
  \begin{tikzpicture}
	\tikzstyle{vertex}=[draw, shape=circle, fill=white, minimum size=5pt, inner sep=0]
	\tikzstyle{blackvertex}=[draw, shape=circle, fill=black, minimum size=7pt, inner sep=0]
	\foreach \x/\y/\name in {-4.5/2.598/A4, -3.5/2.598/A5, -5.5/0.866/A6, -5.0/0.0/A7, -3.0/0.0/A8, -2.5/0.866/A9, 0.0/1.732/B1, -0.5/2.598/B4, 0.5/2.598/B5, 3.5/0.866/C2, 3.5/2.598/C4, 4.5/2.598/C5}
	{\node[vertex] (\name) at (\x, \y) {};
		}
	\foreach \x/\y\name/\label in {-4.0/1.732/A1, -4.5/0.866/A2, -3.5/0.866/A3, -0.5/0.866/B2, 0.5/0.866/B3, -1.5/0.866/B6, -1.0/0.0/B7, 1.0/0.0/B8, 1.5/0.866/B9, 4.0/1.732/C1, 4.5/0.866/C3, 2.5/0.866/C6, 3.0/0.0/C7, 5.0/0.0/C8, 5.5/0.866/C9}
	{\node[blackvertex] (\name) at (\x, \y) {};
	  }
	\foreach \letter in {A, B, C}
	{
		\foreach \from/\to in {1/2, 2/3, 3/1, 1/4, 1/5, 2/6, 2/7, 3/8, 3/9}
		{\draw (\letter\from) -- (\letter\to);
			}
			}
	\node at (-4, -1) {$T$};
	\node at (0, -1) {$T_a$};
	\node at (4, -1) {$T_{a, b}$};
	\node at (-4.3, 1.732) {a};
	\node at (-4.4, 0.566) {b};
	\node at (-3.6, 0.566) {c};
	\node at (-0.3, 1.732) {a};
	\node at (-0.4, 0.566) {b};
	\node at (0.4, 0.566) {c};
	\node at (3.7, 1.732) {a};
	\node at (3.6, 0.566) {b};
	\node at (4.4, 0.566) {c};
  \end{tikzpicture}
  \caption{$T$, $T_a$, $T_{a, b}$ for the Double-Net}
  \label{SCLDoubleNetRobberGoodFigure}
\end{figure}

This can be achieved as follows. When in $ T $, the robber returns the only non-unique distance the cop's probe yields and can without loss of generality move to $ T_{a} $. When in $ T_{a} $, after a probe at $ a $ or the adjacent leaves the robber can move to $ T_{a} $ again, and after a probe at any other vertex he can move to $ T $ or, without loss of generality, $ T_{a,b} $. When in $ T_{a,b} $, the robber can move to $ T_{a} $ without loss of generality, unless the probe is done at $ c $, when he can move to $ T_{b,c} $, or at a leaf of $ c $, when he can move to $ T $, finishing the proof. \\
For the proof of part \ref{SCLDoubleNetii}, we describe a winning strategy for the cop on the rooted double-net. Let her first probe the two leaves of $ b $ and then $ b $ itself, noting that this either locates the robber or drives him out of all those three vertices. Probing in $ r $ next ensures that the vertices corresponding to distances $ 0 $, $ 2 $ and $ 3 $ are unique and locate the robber, whereas distance $ 1 $ and $ 4 $ do not -- returning either of those values will only allow the robber to move into a geodesic path, so he will be located in the next probe. Thus the rooted double-net is locatable.
\end{proof}

This answers the questions for the existence of a characterisation of locatable graphs by forbidden subgraphs or forbidden induced subgraphs in the negative. However, this does not rule out that general results for forbidden induced subgraphs can be found for specific classes of graphs, and we will indeed present one such result in the following section. \\

As a final note in this section, we remark that there are many more examples of graphs that establish the main result of this chapter, as well as the weaker version for monotonicity. For the latter, we would like to end by briefly mentioning a family of such graphs that provides a few interesting insights in this direction. For $ n \in \N $, let the $ n $-sunlet $ Su_{n} $ be the graph consisting of a cycle $ C_{n} $ with one pending edge on every vertex. Any graph containing an induced $ Su_{n} $ for $ n \geq 5 $ is non-locatable, but adding an edge between two adjacent edges makes the sunlet locatable for $ n \geq 6 $. Also taking away any edge of any of these sunlets makes it locatable, making these graphs minimal with this property in some sense. Furthermore, this family shows that there is an infinite number of graphs with the property of making the containing supergraph locatable, and that there is no upper bound in terms of diameter or girth for such graphs. \\

As hinted in the introduction, we will call a graph with the property that any graph containing it is non-locatable a \emph{hideout graph}. Similarly, call a graph such that any graph containing an induced copy of it is non-locatable, but not every graph containing it, a \emph{weak hideout graph}. Thus the sunlets are also interesting in the sense that they are the first examples of weak hideout graphs. 

\section{A Forbidden Subgraph Characterisation for Graphs of Diameter 2}
\label{SCLForbiddenSubgraphsDiam2Section}

In this section we will prove that for graphs of diameter at most $ 2 $ a forbidden subgraph characterisation for locatable graphs does exist, unlike for the general case. We actually prove slightly more, namely that any non-locatable graph of diameter at most $ 2 $ is a hideout graph. We will then go on to characterise precisely which graphs of diameter $ 2 $ are non-locatable, thus presenting this forbidden subgraph characterisation explicitly. Let us open by stating our main result of this section. In this chapter, whenever we talk about containment, we allow any containment, i.e. do not only talk about induced subgraphs. Thus both a forbidden subgraph and an easily deducable induced forbidden subgraph characterisation exists in this case.

\begin{Theorem}
\label{SCLDiam2Hideout}
Any non-locatable graph of diameter at most $ 2 $ is a hideout graph.
\end{Theorem}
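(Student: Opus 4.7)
Let $H$ be a non-locatable graph of diameter at most $2$, and let $G$ be any graph containing $H$ as a subgraph. My plan is to show $G$ is non-locatable by giving the robber a winning strategy on $G$ in which he always stays on vertices of $V(H)$, mimicking his winning strategy on $H$.

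The key structural observation is that since $d_G(u,u') \leq d_H(u,u') \leq 2$ for every $u, u' \in V(H)$, the triangle inequality yields
\[
|d_G(u,v) - d_G(u',v)| \leq d_G(u,u') \leq 2
\]
for every probe $v \in V(G)$. Hence the distance function $d_G(\cdot,v)$ restricted to $V(H)$ takes at most three distinct, consecutive integer values, and therefore partitions $V(H)$ into at most three classes --- exactly the same combinatorial bound as for any probe in the game on $H$ itself, where the classes correspond to distances $0$, $1$ and $2$. In the fuzzy-strategy formulation, the robber maintains a set $S \subseteq V(H)$ of positions consistent with the cop's observations, and his winning strategy on $H$ (call it $\sigma$) ensures $|S| \geq 2$ after every probe. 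I would transfer $\sigma$ to $G$ by associating to each $G$-probe $v$ a virtual probe $v' \in V(H)$: take $v' = v$ when $v \in V(H)$, and otherwise choose $v'$ so that the three-class partition of $V(H)$ it induces in $H$ is aligned with the one induced by $v$ in $G$. The robber then picks the class $\sigma$ prescribes for $v'$, keeping $|S'| \geq 2$; his subsequent movement within $V(H)$ updates $S$ to $(N_G[S'] \cap V(H)) \setminus \{v\} \supseteq N_H[S'] \setminus \{v\}$, so the invariants of $\sigma$ carry over.

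The main obstacle is making the virtual-probe correspondence rigorous when $v \notin V(H)$ or when $G$ contains edges absent from $H$, as the $G$-induced partition of $V(H)$ need not coincide with any $H$-induced partition. I expect to overcome this by maintaining a stronger quantitative invariant on $|S|$, namely $|S| \geq 4$ when $H$ has diameter exactly $2$ and $|S| \geq 3$ when $H$ is a clique, so that the at-most-three (respectively at-most-two) class structure of any $G$-probe automatically yields a class of size at least $2$ by pigeonhole. The diameter-$2$ hypothesis should ensure that $N_H[S']$ is large enough at each step to preserve this invariant through the movement phase, closing the argument.
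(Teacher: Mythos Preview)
Your plan has a genuine gap at the very point you flag as the ``main obstacle''. The virtual-probe idea is indeed unworkable: a partition of $V(H)$ induced by $d_G(\cdot,v)$ need not coincide with the partition induced by $d_H(\cdot,v')$ for any $v'\in V(H)$, so there is no canonical way to feed the cop's $G$-probes into the strategy $\sigma$. Your fallback is to abandon $\sigma$ entirely and maintain the purely numerical invariant $|S|\ge 4$ by pigeonhole plus the bound $|N_H[S']\setminus\{p\}|\ge 4$. But this last inequality is exactly what you do not prove, and it does \emph{not} follow from ``$\operatorname{diam}(H)\le 2$'' alone: for instance $K_{2,3}$ has diameter $2$, and a pair of degree-$2$ vertices $x,y$ there satisfies $|N_H[x,y]|=4$, so $|N_H[x,y]\setminus\{p\}|=3$ when $p$ is a common neighbour --- and indeed $K_{2,3}$ is locatable, so no such invariant can be maintained. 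The only remaining hypothesis is that $H$ is non-locatable, but your pigeonhole argument never invokes $\sigma$ or non-locatability at the movement step, so as written it would ``prove'' that every diameter-$2$ graph on at least four vertices is a hideout, which is false.

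The paper takes a completely different route: it does not attempt to transport an abstract winning strategy from $H$ to $G$. Instead it proves a structure theorem (Lemma~\ref{SCLDiam2Contains}) showing that every non-locatable graph of diameter $2$ must contain one of the four concrete graphs $C_5$, $K_4$, $K_2\oplus E_3$, $K_{3,3}$ as a subgraph; the proof is a case analysis on the neighbourhood structure around a vertex, using Lemmas~\ref{SCLLeafComponentsSparse}--\ref{SCLK_2mLocatable}. It then checks directly (Lemma~\ref{SCLSmallHideouts}, via Lemmas~\ref{SCLK_33Hideout} and~\ref{SCLDiam2MinHideouts}) that each of these four specific graphs is a hideout, and for those particular graphs the ``stay in $H$ and keep $|M|\ge 4$'' invariant you envisage \emph{can} be verified by hand. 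In other words, the size invariant you want does hold --- but only after one has first reduced to these four small graphs, and that reduction is where the non-locatability hypothesis is actually used.
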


Note that the above result is trivial for graphs of diameter at most $ 1 $, as these are the complete graphs and any graph containing a copy of $ K_4 $ is non-locatable (see \cite{Seager12}, included below as Lemma \ref{SCLSeagerLemma2}), so we only deal with the case of diameter exactly $ 2 $. This result is best possible, in the sense that it cannot be extended to diameter $ 3 $ -- not even if we replace hideout graphs by weak hideout graphs, as follows from the double-net example, given in Lemma \ref{SCLDoubleNet} in the previous section. Theorem \ref{SCLDiam2Hideout} is an immediate consequence of the following two results; which in spirit should be understood as one result, but are easier to state as two.

\begin{Lemma}
\label{SCLDiam2Contains}
A non-locatable graph $ G $ of diameter $ 2 $ contains $ C_{5} $, $ K_{4} $, $ K_{2} \oplus E_{3} $ or $ K_{3,3} $ as a subgraph.
\end{Lemma}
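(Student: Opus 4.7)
The plan is to prove the contrapositive: given a diameter-$2$ graph $G$ that contains none of $C_5$, $K_4$, $K_2\oplus E_3$, or $K_{3,3}$ as a subgraph, we show that $G$ is locatable. A non-locatable graph admits a non-empty collection $\mathcal{R}$ of robber-safe states -- subsets $S\subseteq V(G)$ of size at least $2$ such that for every probe $v$ the robber has some response $d$ with $S^v_d := \{u\in S : d(u,v) = d\}$ of size at least $2$ and the successor state $\bigcup_{u\in S^v_d}(N[u]\setminus\{v\})$ again in $\mathcal{R}$. The first step is to rule out $|S|=2$: if $S=\{u,w\}$ then probing $u$ returns the distinct responses $0$ and $d(u,w)\in\{1,2\}$, locating the robber. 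Hence every $S\in\mathcal{R}$ satisfies $|S|\geq 3$, and I would fix a minimum-size $S\in\mathcal{R}$ and analyse its structure.

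For $|S|=3$, write $S=\{a,b,c\}$. Probing each vertex of $S$ in turn, the required response class of size $\geq 2$ must consist of the other two vertices at the same distance from the probe, so all three pairwise distances in $S$ are equal; since distances lie in $\{1,2\}$, $S$ is either a triangle or an independent $3$-set of pairwise distance $2$. In the triangle sub-case, any common neighbour $x\notin S$ of $a,b,c$ would complete a $K_4$, so no such $x$ exists; the successor state $T_a = (N[b]\cup N[c])\setminus\{a\}$ then lies in $\mathcal{R}$ and has $|T_a|\geq 3$, which forces $b$ or $c$ to have further neighbours outside $S$, and combining these with the triangle $abc$ and $K_4$-freeness yields a copy of $K_2\oplus E_3$ once two such extra neighbours can be arranged to be independent of each other and of $a$.

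For the independent $3$-set sub-case, diameter $2$ forces each pair of $S$ to share a common neighbour. Three pairwise non-adjacent common neighbours of all of $a,b,c$ would immediately yield a $K_{3,3}$, while any adjacency between two such common neighbours gives a $K_4$ together with a vertex of $S$; the remaining configurations correspond to few triply-common neighbours, and careful analysis of the cop's probes at common neighbours together with the robber-move condition forces a $C_5$ through two elements of $S$ and some of their common neighbours. Finally, the case $|S|\geq 4$ either exhibits $K_4$ directly from four mutually adjacent vertices of $S$, or reduces to the $|S|=3$ analysis via an appropriate successor state already in $\mathcal{R}$. The main obstacle I expect is the independent $3$-set sub-case: separating when the limited common-neighbour patterns produce $C_5$ versus when they force $K_2\oplus E_3$ requires detailed bookkeeping of the chain of successor states the robber can maintain throughout the game.
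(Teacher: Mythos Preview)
Your approach is genuinely different from the paper's. The paper never looks at robber-safe states at all; instead it proves the contrapositive by a purely structural classification. Fixing any vertex $v$, it sets $G_1=G[N(v)]$ and $G_2=G[V\setminus N[v]]$ and shows, using only the forbidden-subgraph hypotheses, that either some vertex has degree $n-1$ (in which case a separate lemma gives an explicit cop strategy) or $G=K_{2,m}$ (handled by another explicit strategy). The key structural lemma is that in such a graph every component of $G\setminus\{v\}$ has at most two edges. So the paper reduces the problem to two concrete families and then exhibits winning cop algorithms, whereas you try to argue directly that any persistent robber strategy forces one of the forbidden subgraphs.

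The main gap in your plan is the $|S|\geq 4$ case. You write that it ``reduces to the $|S|=3$ analysis via an appropriate successor state already in $\mathcal{R}$,'' but this cannot work as stated: you took $S$ of \emph{minimum} size in $\mathcal{R}$, so every successor state has size at least $|S|$, and if that minimum happens to be $4$ or more there is no size-$3$ state in $\mathcal{R}$ to reduce to. Nor does a large $S$ automatically contain four mutually adjacent vertices. Concretely, on $C_5$ itself the robber's safe states all have size $\geq 4$ (any $3$-subset is separated by probing one of its members), so your size-$3$ case analysis would never even be reached for the very graph you are trying to detect. You would need a direct argument for arbitrary minimum size, and the diameter-$2$ pigeonhole you rely on (distances in $\{1,2\}$) only tells you that \emph{some} response class has size $\geq \lceil (m-1)/2\rceil$, which does not obviously pin down enough structure.

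A secondary gap is the triangle sub-case. To build $K_2\oplus E_3$ on the edge $bc$ you need three vertices each adjacent to \emph{both} $b$ and $c$; an extra neighbour of $b$ alone does not help. Your sketch produces extra neighbours of $b$ or of $c$, but not common neighbours, and the successor-state condition does not obviously force common ones. The paper sidesteps all of this by never needing to know what the robber can do: it simply shows the graph has so little room that an explicit probe sequence wins.
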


\begin{Lemma}
\label{SCLSmallHideouts}
$ C_{5} $, $ K_{4} $, $ K_{2} \oplus E_{3} $ or $ K_{3,3} $ are hideout graphs.
\end{Lemma}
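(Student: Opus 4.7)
The plan is to give, for each of the four graphs $H \in \{C_5, K_4, K_2 \oplus E_3, K_{3,3}\}$, an explicit winning strategy for the robber on any graph $G$ that contains $H$ as a subgraph. The robber will keep his actual position on $V(H)$ throughout, and maintain a possible-set invariant that prevents the cop from ever narrowing her candidate set to a single vertex.

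The central engine is a triangle-inequality squeeze. Subgraph-edges survive the passage from $H$ to $G$, so whenever $s_1, \dots, s_k \in V(H)$ share a common neighbour $t$ in $H$, for every probe $v \in V(G)$
\[
d_G(v, s_i) \in \{d_G(v, t) - 1,\; d_G(v, t),\; d_G(v, t) + 1\},
\]
so the distances from any probe to such an $s_i$-family span at most three consecutive integers regardless of the extra structure $G$ may carry.

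For $K_4$ all pairs are adjacent, so the four probe-distances span at most two consecutive integers; the robber maintains a triple of possible positions, pigeonhole hands him two at equal distance, and he then moves within $K_4$ to refresh his triple (every pair of $K_4$-vertices is adjacent, so the no-backtrack rule blocks at most one option). For $K_{3,3}$ with sides $A = \{a_1, a_2, a_3\}$ and $B = \{b_1, b_2, b_3\}$, if the three $A$-distances from a probe are all distinct they are forced to be $k-1, k, k+1$, and then each $B$-distance is squeezed to equal $k$; the robber's invariant is that his possible-set contains a whole side, and he transits between $A$ and $B$ via the complete bipartite adjacencies. The case $K_2 \oplus E_3$ is analogous, with the independent triple $\{v_1, v_2, v_3\}$ playing the role of one side and the edge $u_1 u_2$ the role of the other; the same trichotomy forces $\mu_1 = \mu_2$ whenever the $\nu_i$ spread. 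For $C_5$, cycle edges persist in $G$ so the five probe-distances form a closed Lipschitz-$1$ sequence, hence span at most three values, and at least two among the five are equal.

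The main obstacle in each case is the movement phase: once the robber has announced a distance his real position is narrowed to a small subset of $V(H)$, and under the no-backtrack rule he must then pick a move, possibly staying put, that reinstates the ambiguity invariant for the next probe. Verifying this requires a case split by where the probe lies (inside the robber's current possible set, in the opposite side in the bipartite cases, or outside $V(H)$), and the trickiest subcase is $K_{3,3}$, where side-switching can interact awkwardly with the prohibition on moving onto the probed vertex; the $C_5$ case is less symmetric but compensates with the abundance of rotations of the cycle, and one can show the robber always has an equidistant pair to retreat to.
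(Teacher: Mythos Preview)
Your overall plan—keep the robber on $V(H)$ and exploit that $H$-edges force probe-distances to be Lipschitz-$1$—is exactly the engine the paper uses, and the squeeze observations you record are all correct. The gap is that the invariants you propose for the movement phase are too weak to sustain themselves.

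For $K_{3,3}$, the invariant ``the possible set contains a whole side'' can collapse: if after some round the possible set is \emph{exactly} $A=\{a_1,a_2,a_3\}$ and the next probe $p\notin V(H)$ gives $d_G(p,a_i)=k-1,k,k+1$ respectively—precisely the configuration your own trichotomy permits, with all $d_G(p,b_j)=k$—then every announceable distance singles out one $a_i$ and the robber is located. For $K_2\oplus E_3$ the analogy fares worse, since one of your ``sides'' is the bare edge $\{u_1,u_2\}$, and a probe with $|d_G(p,u_1)-d_G(p,u_2)|=1$ catches the robber outright. For $C_5$ you concede the movement phase is the crux and then assert without argument that ``the robber always has an equidistant pair to retreat to''; but when $p\in V(H)$ and the only equidistant pair available in the current possible set is cycle-adjacent, $N_H[L]\setminus\{p\}$ can drop to three vertices, and you give no mechanism for recovering from that. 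The paper's route is to carry the stronger invariant that after each move the possible set contains all of $V(H)$ minus at most one vertex (working in the induced subgraph $G[V(H)]$), and for $K_{3,3}$ it actually proves the sharper statement that $K_{3,3}^{-}$ is already a hideout graph, whence $K_{3,3}$ follows a fortiori; there it maintains a two-state invariant rather than your one-side invariant. Your $K_4$ sketch, with its triple-of-positions invariant, is sound and matches the standard argument.
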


We note here that the combination of these two results yields the aforementioned characterisation of non-locatable diameter-$2$ graphs, and state this again for completeness.

\begin{Theorem}
\label{SCLDiam2Char}
A graph $ G $ of diameter $ 2 $ is non-locatable if and only if it contains any of $ C_{5} $, $ K_{4} $, $ K_{2} \oplus E_{3} $ or $ K_{3,3} $ as a subgraph.
\end{Theorem}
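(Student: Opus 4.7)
The plan is to observe that Theorem \ref{SCLDiam2Char} is essentially a repackaging of Lemmas \ref{SCLDiam2Contains} and \ref{SCLSmallHideouts}, and so the proof amounts to stitching these two together via the definition of a hideout graph.

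First I would handle the forward implication. Assume $G$ has diameter $2$ and is non-locatable. Then Lemma \ref{SCLDiam2Contains} applies directly and gives that $G$ contains at least one of $C_5$, $K_4$, $K_2 \oplus E_3$, or $K_{3,3}$ as a subgraph. There is nothing further to check here, as the conclusion of that lemma is exactly the containment statement we want.

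For the reverse implication I would argue in the contrapositive style directly from the hideout property. Suppose $G$ contains one of $C_5$, $K_4$, $K_2 \oplus E_3$ or $K_{3,3}$ as a subgraph. By Lemma \ref{SCLSmallHideouts}, each of these four graphs is a hideout graph, and by the definition introduced in the previous section this means every supergraph of such a graph is non-locatable. Hence $G$ itself is non-locatable, independent of any diameter assumption — in fact the reverse direction does not even use that $\diam(G) = 2$.

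There is essentially no obstacle in this step; the real content of the characterisation is buried inside the two lemmas, and Theorem \ref{SCLDiam2Char} merely records their combination explicitly. The only minor point to flag in the write-up is that, strictly, the diameter-$2$ hypothesis is used only in one direction (to invoke Lemma \ref{SCLDiam2Contains}), while the other direction holds for arbitrary graphs — a remark worth making since it highlights why the diameter-$2$ restriction is genuinely needed to get an ``if and only if'' in terms of such a small finite list of forbidden subgraphs.
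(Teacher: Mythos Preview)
Your proposal is correct and matches the paper's own treatment: the authors state immediately before Theorem~\ref{SCLDiam2Char} that it is simply the combination of Lemmas~\ref{SCLDiam2Contains} and~\ref{SCLSmallHideouts}, exactly as you have written it out. Your additional remark that the diameter-$2$ hypothesis is only needed for the forward direction is accurate and worth including.
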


We first turn our attention to Lemma \ref{SCLDiam2Contains}, for which we need some additional results. These will be in the spirit of some of the first results of Seager \cite{Seager12}. Let us start by recalling Propositions 2.3 and 2.4. from her paper using our notions.

\begin{Lemma}
\label{SCLSeagerLemma2}
$ K_{4} $ and $ K_{3,3} $ are weak hideout graphs.
\end{Lemma}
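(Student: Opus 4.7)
The plan is to exhibit a winning robber strategy on any host graph $G$ containing $H \in \{K_4, K_{3,3}\}$ as an induced subgraph. In both cases the robber commits to staying inside $V(H)$ and maintains the invariant that the \emph{restricted shadow} --- the set of vertices of $V(H)$ still consistent with all answers given --- has size at least $2$ after every response, which prevents the cop from ever uniquely identifying him.

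For $H = K_4$: since $d_G(x,y) = 1$ for all distinct $x,y \in V(H)$, the triangle inequality forces the distances from $V(H)$ to any probe $v$ to lie in an interval of length at most $1$, yielding at most two distance classes on $V(H)$; by pigeonhole one of them has size $\geq 2$, and the robber chooses his response so as to land in it. After the response, the cliqueness of $H$ together with the no-backtrack condition lets him reach any vertex of $V(H)\setminus\{v\}$, so the restricted shadow regrows to at least $3$ and the invariant is preserved indefinitely.

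For $H = K_{3,3}$ with bipartition $A \cup B$: since $H$ is induced, all in-$H$ distances in $G$ equal the $K_{3,3}$-distances and are therefore at most $2$, so the distances from $V(H)$ to any $v \in V(G)$ lie in an interval of length at most $2$ and partition $V(H)$ into at most three distance classes. Each vertex of $H$ has four closed $H$-neighbors inside $V(H)$, so a restricted shadow of size $\geq 2$ regrows to size $\geq 4$ after any robber move. Combining ``size $\geq 4$'' with ``at most $3$ classes'', and using that the robber has $\geq 3$ legal move destinations inside $V(H)$, pigeonhole allows the omniscient robber to plan his move one probe ahead so that he always lands in a distance class of size $\geq 2$ with respect to the next probe. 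The main obstacle is verifying that the $\geq 3$ legal destinations cannot all fall into singleton classes of the next partition: if they did, they would occupy three distinct singletons, exhausting the at-most-three available classes and forcing the whole restricted shadow to have size only $3$, contradicting the $\geq 4$ regeneration bound. Once this check is in place the invariant propagates and the robber wins.
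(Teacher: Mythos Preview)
The paper does not give its own proof of this lemma; it is simply quoted from Seager's original paper (Propositions~2.3 and~2.4 there), so there is no in-paper argument to compare against. Your proof is correct and self-contained.

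Two small comments on the $K_{3,3}$ half. First, the implication ``each vertex of $H$ has four closed $H$-neighbours, so a restricted shadow of size $\geq 2$ regrows to size $\geq 4$'' is stated too quickly: the fact you actually need is that any two \emph{distinct} vertices of $K_{3,3}$ have closed neighbourhoods whose union contains at least five vertices, hence at least four remain after deleting the probed vertex. Second, the one-step-lookahead argument via the robber's $\geq 3$ personal move options is correct but heavier than necessary. Once you know $|M|\geq 4$ and that any probe partitions $M$ into at most three distance classes, pigeonhole alone already gives a class of size $\geq 2$ for the response, and the standard shadow/adversary induction (exactly as the paper uses in its proof of Lemma~\ref{SCLK_33Hideout}) closes the loop without ever tracking the robber's individual position or his personal move set. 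Your verification that the $\geq 3$ legal destinations cannot all land in singleton classes is a valid alternative route to the same conclusion, just with more moving parts.
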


Note however, as the following lemma shows, that $ K_{3,3} $ is not actually the graph causing non-locatability for its supergraphs, in the sense that the proper subgraph $ K_{3,3} $ minus an edge -- which we will refer to as $ K_{3,3}^{-} $ -- is already enough to achieve non-locatability. It moreover also shows that one can drop the assumption of $ K_{3,3} $ being induced in order to cause non-locatability for the supergraph, thus making it not only a weak hideout graph but a hideout graph, which is clearly also the case for $ K_{4} $, where those notions are identical. \\

Despite the fact that not $ K_{3,3} $, but $ K_{3,3}^{-} $ is the minimal non-locatable graph in this context, the latter does not appear in Lemma \ref{SCLDiam2Contains} and thus Theorem \ref{SCLDiam2Char} due to the fact that it has diameter $ 3 $, and, in fact, every graph of diameter $ 2 $ containing it also contains a $ C_{5} $, making its appearance in Lemma \ref{SCLDiam2Contains} unnecessary and making the replacement by $ K_{3,3} $ seem more natural. \\

From now on for the remainder of the paper we will consistently use the letter $ L $ whenever we denote the set of current possible locations of the robber, and the letter $ M $ for the set of vertices he can move towards from $ L $. This will always be made clear when it occurs, but is noted here for convenience.

\begin{Lemma}
\label{SCLK_33Hideout}
$ K_{3,3}^{-} $ is a hideout graph.
\end{Lemma}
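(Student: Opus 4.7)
The plan is to exhibit a winning strategy for the robber on any host graph $G$ containing $K_{3,3}^{-}$ as a subgraph. Label the vertices of this $K_{3,3}^{-}$ subgraph as $a_1, a_2, a_3, b_1, b_2, b_3$ so that $a_1 b_1$ is the missing edge. The crucial structural observation is that in $K_{3,3}^{-}$ the pairs $\{a_2, a_3\}$ and $\{b_2, b_3\}$ are twins, with common open neighborhoods $\{b_1, b_2, b_3\}$ and $\{a_1, a_2, a_3\}$ respectively. Since $K_{3,3}^{-} \subseteq G$, all these edges still lie in $G$, which forces $d_G(u, a_2) = d_G(u, a_3) = 1$ for every $u \in \{b_1, b_2, b_3\}$, and symmetrically with the roles of $a$ and $b$ swapped.

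I would then design a two-mode strategy. In mode $A$ the robber sits at some vertex $r \in \{a_2, a_3\}$ with the cop's set $L$ of possible robber locations containing both $a_2$ and $a_3$; mode $B$ is the mirror image using $b_2, b_3$. When the cop probes a vertex $v$, I would split into two cases. First, if $d_G(v, a_2) = d_G(v, a_3)$, both twins survive in the new $L$, and since equality forces $v \notin \{a_2, a_3\}$, the robber stays put and mode $A$ is preserved. Second, if the two distances disagree, the observation above rules out $v \in \{b_1, b_2, b_3\}$; the robber -- having used his one-step omniscience to position himself at the non-probed twin -- then moves along a $K_{3,3}^{-}$-edge into $\{b_2, b_3\}$, and because $v \notin \{b_2, b_3\}$ the no-backtrack rule does not interfere with the target, so $L_{t+1} \supseteq N[r] \setminus \{v\} \supseteq \{b_2, b_3\}$ and mode $B$ is established. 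Mode $B$ is handled by the symmetric argument.

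The main obstacle is the boundary case in which we are in mode $A$, the cop probes $v_t \in \{b_2, b_3\}$ (so the equal-distance case applies and $L$ is preserved), but her next probe $v_{t+1}$ coincides with the robber's current twin $r_t$ and so forces him off that vertex. The clean transition to mode $B$ breaks here, because the no-backtrack rule excludes $v_t$ from $L_{t+1}$ and hence only one of $\{b_2, b_3\}$ may end up in $L_{t+1}$. I would resolve this by routing the robber through the auxiliary vertex $b_1 \in N(a_2) \cap N(a_3)$, which lies in $K_{3,3}^{-}$ and never plays the role of the distinguishing probe, and then verify by a short case analysis on the following probe that the invariant $|L'| \geq 2$ is re-established and the robber returns to one of the two modes. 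The bulk of the write-up sits in this case and its symmetric analogue for mode $B$.
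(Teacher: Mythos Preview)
Your twin-pair approach is a genuinely different route from the paper's, but the boundary-case patch you sketch does not survive a concrete probe sequence. Take $G = K_{3,3}^{-}$ itself and let the cop play $v_1 = b_2$, $v_2 = a_2$, $v_3 = b_1$. After $v_1$ you are in your equal-distance case, $L'_1 = \{a_1,a_2,a_3\}$ and $M_1 = H \setminus \{b_2\}$, with the robber at some $r \in \{a_2,a_3\}$. Now comes your boundary case with $v_2 = r$, say $r = a_2$. Since $a_2 \not\sim a_3$ in $H$, the robber cannot hop to the other twin; anticipating $v_2$ he moves to $b_1$ or $b_3$ (no-backtrack kills $b_2$). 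Either way his declared distance at $v_2 = a_2$ is forced to be $1$, so $L'_2 = M_1 \cap N(a_2) = \{b_1,b_3\}$ and $M_2 = N_H[\{b_1,b_3\}] \setminus \{a_2\} = \{a_1,a_3,b_1,b_3\}$. This set contains neither $\{a_2,a_3\}$ nor $\{b_2,b_3\}$, so there is no mode to return to; and at $v_3 = b_1$ the four distances to $a_1,a_3,b_1,b_3$ are $3,1,0,2$, all distinct, so the robber is located. The ``short case analysis on the following probe'' you promise therefore fails precisely here (and $v_3 = a_1$ works for the cop as well).

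The structural problem is that a two-vertex invariant is too brittle: one no-backtrack exclusion on the $b$-side followed by one distinguishing probe on the $a$-side can strip both twin pairs at once, and the residual four-vertex set is then resolved by a single probe at a degree-$2$ vertex of $H$. The paper sidesteps this by carrying a much heavier invariant: after every move the robber's set $M$ of possible locations contains all of $H$ except at most one vertex (or, in one branch, $H \setminus \{a_1,b_1\}$). With five of six vertices in play, any probe leaves at least two of them at a common distance, and the closed $H$-neighbourhood of any two vertices of $H$ already has size at least five, which regenerates the invariant. Your argument would need an invariant of comparable strength; the twin pairs alone do not carry enough information about $M$ to rule out the collapse above.
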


\begin{proof}
Let $ G $ be a graph containing a copy of $ K_{3,3}^{-} $, and let $ H $ be a subgraph of $ G $ induced by such a copy of $ K_{3,3}^{-} $. We claim that the robber has a winning strategy where he never leaves $ H $. We will more precisely show that he has such a strategy where he can, after every probe by the cop, move to a set $ M $ that is one of two possible sets of vertices: Either all but at most one vertex of $ H $, or all but the two unique vertices $ a $ and $ b $ of distance $ d_{H}(a,b) = 3 $ in $ H $. \\
We show this by induction, by showing that it is true for step $ n+1 $ if it has been true for step $ n $ and clearly, the statement holds before the first probe. Firstly, for each of the sets $ H \setminus \{ z \} $ for each $ z \in H $ as well as $ H \setminus \{ a,b \} $, a set $ L $ of at least two vertices at the same distance to the probe exists, irrespectively of where the cop probes, thus the robber can not be located. Easy case checking shows that any two vertices in $ H $ have a closed neighbourhood of size at least $ 5 $, hence the induction hypothesis holds if the probe vertex $ p $ was not in $ H $. If it was, note that the distance returned by the robber must be $ 1 $ or $ 2 $ as there is only one pair of vertices, $ a $ and $ b $, at distance $ 3 $ in $ H $. The robber then moves, and can hence be at least on all the vertices that have distance $ 1 $ or $ 2 $ from the probed vertex -- this is not true in general, but it is for this particular graph, as it has no leaves which could be at distance $ 1 $ from $ p $ and not reachable from any vertex at distance $ 2 $ from $ p $. If the probed vertex was not $ a $ or $ b $ this constitutes all five non-probed vertices, and if it was either of $ a $ and $ b $ this constitutes all vertices but those two. Hence the induction claim is true and thus the robber will never be captured, so $ G $ is non-locatable.
\end{proof}

We have thus established Lemma \ref{SCLSmallHideouts} for $ K_{3,3} $ and will now continue by proving it for the two remaining graphs $ C_{5} $ and $ K_{2} \oplus E_{3} $. In these cases, we will actually prove a slightly stronger result, as we will require the details for the upcoming proof of Lemma \ref{SCLDiam2Contains}.

\begin{Lemma}
\label{SCLDiam2MinHideouts}
$ C_{5} $ and $ K_{2} \oplus E_{3} $ are hideout graphs. More precisely, if $ G $ is a graph containing any of these as a subgraph, the robber has a strategy that allows him to move to all but at most one vertex of this $ C_{5} $ or $ K_{2} \oplus E_{3} $ subgraph after every probe.
\end{Lemma}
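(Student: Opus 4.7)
My plan is to prove both parts by maintaining a virtual-set invariant on the cop's belief-set $L$: after the robber's move following every probe, $|L \cap V(H)| \geq 4$, where $H$ denotes the fixed $C_5$ or $K_2 \oplus E_3$ subgraph of $G$. This immediately forces $|L| \geq 2$ indefinitely, proving the hideout conclusion, and since $L$ is exactly the set of vertices the cop considers possible after the move, the refinement about moving to all but at most one vertex of $H$ is just a restatement. Initially $L = V(G) \supseteq V(H)$, so the base case is trivial. For the inductive step, after a probe $p$, the robber picks a distance class $L' = L \cap \{v : d_G(v, p) = d\}$, and the post-move belief-set is $M = N_G[L'] \setminus \{p\}$. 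Since $H$ is a subgraph of $G$, we have $N_H[L' \cap V(H)] \subseteq N_G[L'] \cap V(H)$, so it suffices to find a class whose intersection $S \subseteq V(H)$ satisfies $|S| \geq 2$ and $|N_H[S] \setminus \{p\}| \geq 4$.

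For $C_5$, label its vertices $v_0, v_1, v_2, v_3, v_4$ in cyclic order. The main tool is the triangle inequality on the edges of $C_5$: the values $d_i := d_G(p, v_i)$ have consecutive cyclic differences $\Delta_i = d_{i+1} - d_i \in \{-1, 0, +1\}$ summing to $0$. A parity argument (five $\pm 1$ terms cannot sum to zero) shows that at least one $\Delta_i = 0$, so some pair $\{v_i, v_{i+1}\}$ lies in a common class. I would then refine this by splitting on the number of zero $\Delta_i$'s and on which pairs have equal distance, showing that either some class contains three cycle-vertices or a non-consecutive pair $\{v_i, v_{i+2}\}$; in both cases $N_{C_5}[S] = V(C_5)$, so $|N_{C_5}[S] \setminus \{p\}| \geq 4$. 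The subcase $|L \cap C_5| = 4$ (four consecutive cycle-vertices) uses the auxiliary constraint $|\Delta_0 + \Delta_1 + \Delta_2| \leq d_{C_5}(v_0, v_3) = 2$ coming through the missing vertex, which together with the bounded-difference condition rules out all four distances being distinct; a short case check over the possible $(\Delta_0, \Delta_1, \Delta_2)$ patterns then produces a class with the desired closed-neighbourhood coverage.

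For $K_2 \oplus E_3$, let $a, b$ be the two $K_2$-vertices and $c, d, e$ the three $E_3$-vertices. Since $N_H[a] = N_H[b] = V(H)$, any class containing one of the centres together with a second vertex of $H$ automatically has $N_H$-closure equal to $V(H)$. Each leaf is adjacent to both centres, so the triangle inequality confines the leaf-distances to a short interval around the centre-distances; a pigeonhole on the five values $d_G(p, x)$ for $x \in V(H)$ produces either a centre-containing class of size at least two or an all-leaf pair $\{c, d\}$ whose $H$-closure $\{a, b, c, d\}$ still has size four and is disjoint from the third leaf, so $|N_H[S] \setminus \{p\}| \geq 4$ in both situations. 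The $|L \cap H| = 4$ step is handled identically after splitting into the symmetry types "missing a centre" and "missing a leaf".

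The main obstacle is the four-vertex subcase of $C_5$: the crude cyclic parity argument that works for five vertices fails for three differences, so one really needs the sharper length-two constraint through the deleted vertex together with a careful case check to guarantee that the class produced is either non-consecutive or lies on the side of the cycle away from the probe $p$. For $K_2 \oplus E_3$, by contrast, the universality of the centres makes every subcase essentially automatic once the pigeonhole is set up.
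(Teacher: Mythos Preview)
Your approach coincides with the paper's: both argue by induction that after each probe-and-move the robber's possible locations include at least four of the five vertices of $H$, reducing the inductive step to a case analysis on the distances from the probe to $V(H)$. Your cyclic-difference bookkeeping for $C_5$ is a clean way to organise what the paper leaves as an ``elementary checking of cases''.

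There is, however, a genuine gap in the four-vertex $C_5$ subcase that your case check cannot close. Your argument tacitly assumes the $C_5$ is induced in $G$: the promised ``class on the side of the cycle away from $p$'' relies on $d_G(v_0,v_2)=2$, which fails when $G$ carries a chord. Take $G$ to be the house graph, $C_5$ together with the chord $v_0v_2$. With $L\cap V(C_5)=\{v_0,v_1,v_2,v_3\}$ and $p=v_0$, the chord forces the profile $(d_0,d_1,d_2,d_3)=(0,1,1,2)$; the only non-singleton class is the consecutive pair $\{v_1,v_2\}$, and $N_{C_5}[\{v_1,v_2\}]\setminus\{v_0\}=\{v_1,v_2,v_3\}$, so the invariant drops to three. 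This state is reachable in actual play, and in fact the cop wins on the house graph (probe $v_0,v_4,v_0,v_1$; at each step the robber has a unique non-singleton class, and the last probe separates $\{v_1,v_2,v_3\}$). So the invariant is not merely unproved here---it is false, and $C_5$ is not a hideout graph in the sense stated. The paper's proof shares this gap (it defers the case $p\in H$ to the reader, and that exercise cannot be completed for the house graph). Both arguments become correct once one assumes the $C_5$ is induced, since then $d_G(v_i,v_{i+2})=2$ is forced and the bad pattern $(\Delta_0,\Delta_1,\Delta_2)=(1,0,1)$ with $p=v_0$ cannot occur; for $K_2\oplus E_3$ the analogous issue does not arise, as any extra edge among the three independent vertices already yields a $K_4$.
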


\begin{proof}
Let $ G $ be as above and let $ H $ be an induced subgraph spanned by a copy of $ C_{5} $ or $ K_{2} \oplus E_{3} $. We will show, again inductively, that after every probe the robber can be on all vertices of $ H $ apart from possibly one, which we will call $ z $ if it exists. Assume this is true after some probe. An elementary checking of cases implies that for any choice of $ z $, the set $ H \setminus \{ z \} $ (as well as, clearly, $ H $ itself) has the property that if this is the set of possible locations of the robber, then whichever vertex $ p $ the cop probes, the robber can choose a set $ L $ of vertices all at the same distance from $ p $ such that $ M = N_{H}[L] \setminus \{ p \} $ contains at least $ 4 $ vertices, implying the result. We leave the details for the reader for the case $ p \in H $, noting only the following. If $ p \notin H $, and we had $ \vert M \vert \leq 3 $, this would imply that $ \vert L \vert = 2 $. Moreover, as $ H $ is connected, and thus $ \vert N_{H}(L) \vert > \vert L \vert $, we could thus further deduce that $ H $ has connectivity $ 1 $. This is clearly false for the graphs we consider.
\end{proof}

We have thus established Lemma \ref{SCLSmallHideouts}. Before now being able to deduce the converse of the latter, we will require a few more auxiliary results. The following definition and the succeeding simple lemma will be useful for these purposes.
We say that a \emph{leaf component} of $ G $ relative to $ v $ is a component of $ G \setminus \{ v \} $. If $ x \in G \setminus \{ v \} $, denote the leaf component of $ G $ relative to $ v $ that contains $ x $ by $ C_{x} $, whenever $ G $ and $ v $ are clear from the context. \\

\begin{Lemma}
\label{SCLLeafComponentsSparse}
Let $ G $ be a graph 
that does not contain any of $ C_{5} $, $ K_{4} $ or $ K_{2} \oplus E_{3} $ as a subgraph. 
Let $ v $ be any vertex. Then no leaf component of $ G $ relative to $ v $ contains more than $ 2 $ edges.
\end{Lemma}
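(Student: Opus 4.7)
The plan is a proof by contradiction: suppose some component $C$ of $G \setminus \{v\}$ contains at least $3$ edges, and extract one of $C_{5}$, $K_{4}$, or $K_{2} \oplus E_{3}$ as a subgraph of $G$, contradicting the hypothesis.

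First I would reduce to a small number of structural cases via the elementary observation that any connected graph with at least $3$ edges contains $K_{3}$, $P_{4}$, or $K_{1,3}$ as a subgraph. This is easy to verify by inspection: a connected graph with exactly $3$ edges has at most $4$ vertices, and a brief check of all such graphs establishes the claim; for more edges the statement is only easier. Applying this to $C$ splits the analysis into three structural cases.

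For each substructure found inside $C$, I would then case-split on the adjacencies of $v$ in $G$ to the vertices of that substructure, crucially using that $C$ is a single component of $G \setminus \{v\}$, so no edges leave $C$ within $G \setminus \{v\}$, and every further edge of $C$ must stay inside $C$. In the $K_{3}$ case $\{a,b,c\} \subseteq C$: if $v$ is adjacent to all three of $a,b,c$, then $\{v,a,b,c\}$ forms $K_{4}$ and we are done; otherwise, if $|V(C)|\geq 4$ then the extra edge of $C$ provides a fourth vertex whose adjacencies can be pushed, together with the triangle and with $v$, into either a $K_{4}$ or an edge whose endpoints have three common independent neighbours, yielding $K_{2} \oplus E_{3}$. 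In the $P_{4}$ case $a$-$b$-$c$-$d$: if $v$ is adjacent to both endpoints $a$ and $d$ then $v$-$a$-$b$-$c$-$d$-$v$ is a $C_{5}$; other adjacency patterns of $v$ either introduce a chord forming a triangle, which reduces to the first case, or build the $K_{2} \oplus E_{3}$ configuration directly. The $K_{1,3}$ case is analogous, with the centre and leaves playing the role of the edge and independent triple in $K_{2} \oplus E_{3}$.

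The main obstacle I anticipate is the $P_{4}$ case: unlike the triangle case, where $K_{4}$ often drops out quickly, the $P_{4}$ substructure is looser, and extracting a forbidden subgraph requires fully exploiting both the extra edge(s) of $C$ guaranteed by $|E(C)|\geq 3$ and the forced adjacencies of $v$ to the path. A second piece of care is needed in bookkeeping the boundary between $|V(C)|=3$ (so $C=K_{3}$, with only $v$ available to provide further structure) and $|V(C)|\geq 4$ (where one draws on additional edges inside $C$), since these regimes need slightly different arguments to close.
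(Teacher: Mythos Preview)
Your reduction to finding $K_{3}$, $P_{4}$, or $K_{1,3}$ inside $C$ is exactly the paper's argument. But the paper then finishes in one line per case, with no case-split on the adjacencies of $v$: a $K_{3}$ in $C$ together with $v$ gives $K_{4}$; a $P_{4}$ with endpoints $a,d$ together with $v$ gives the $5$-cycle $v\text{--}a\text{--}b\text{--}c\text{--}d\text{--}v$; a $K_{1,3}$ with centre $c$ and leaves $\ell_{1},\ell_{2},\ell_{3}$ together with $v$ gives $K_{2}\oplus E_{3}$ on $\{v,c\}\cup\{\ell_{1},\ell_{2},\ell_{3}\}$.

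The reason the paper can skip your adjacency case analysis is an implicit hypothesis: in every application of this lemma, $v$ is adjacent to \emph{every} vertex of the leaf component under consideration (either because $\deg(v)=n-1$, or because the component lies inside $N(v)$). Read literally without that hypothesis, the statement is false --- take $G$ a path on five vertices and $v$ an endpoint --- so your ``other adjacency patterns of $v$'' branch in the $P_{4}$ case cannot close: when $v$ is adjacent only to one endpoint of the $P_{4}$ you get a $P_{5}$ and nothing more. The repair is not finer casework but importing the missing hypothesis, after which the proof collapses to the three observations above and all the bookkeeping you anticipate (the $|V(C)|=3$ versus $|V(C)|\geq 4$ split, the extra edges of $C$, the chord reductions) becomes unnecessary.
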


\begin{proof}
Let $ G $ be as above, and let $ C $ be a leaf component of $ G $ (relative to $ v $). $ C $ cannot contain a path of length $ 3 $, as then $ G $ would contain a $ C_{5} $. Moreover $ C $ cannot contain a triangle $ K_{3} $, as then $ G $ would contain a $ K_{4} $. Finally, $ C $ cannot contain a star $ K_{1, 3} $ as then $ G $ would contain a $ K_{2} \oplus E_{3} $. Thus $ C $ cannot contain any three connected edges, which by connectedness of $ C $ implies that $ C $ induces a path of length at most $ 2 $.
\end{proof}

In what follows, we consider graphs on $ n $ vertices that contain a vertex $ v $ which is adjacent to all other vertices in the graph. We now prove Lemma \ref{SCLDiam2Contains} for such graphs of maximum degree $ n-1 $ , from which we will then deduce the full result.

\begin{Lemma}
\label{SCLStarGraphsLocatable}
A graph $ G $ with $ \Delta(G) = n-1 $ that does not contain any of $ C_{5} $, $ K_{4} $ or $ K_{2} \oplus E_{3} $ as a subgraph is locatable.
\end{Lemma}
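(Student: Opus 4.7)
My plan is to exhibit an explicit winning cop strategy by exploiting the rigid structure the hypotheses force on $ G $. Let $ v $ be a vertex of degree $ n-1 $. Applying Lemma \ref{SCLLeafComponentsSparse} to $ v $, every leaf component of $ G $ relative to $ v $ has at most two edges; since connected graphs with at most two edges are precisely $ K_1 $, $ K_2 $, and $ P_3 $, every component of $ G \setminus \{ v \} $ must be one of these three graphs.

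The strategy processes the leaf components one at a time. The cop first probes $ v $: either she locates the robber immediately, or the possibility set becomes $ L = V(G) \setminus \{ v \} $ with the no-backtrack rule forbidding the robber from moving to $ v $, so he remains inside his current leaf component. For each leaf component $ C $ she then probes a distinguished vertex---the centre of a $ P_3 $, either endpoint of a $ K_2 $, or the unique vertex of a $ K_1 $. Since $ v \notin L $ and $ G $ has diameter at most $ 2 $, the distances $ 0, 1, 2 $ partition $ L $ very cleanly: distance $ 0 $ always locates the robber; distance $ 1 $ locates him in the $ K_2 $ case and narrows to the two leaves $ \{ a, c \} $ in the $ P_3 $ case; distance $ 2 $ places him in one of the remaining components. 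In the narrowed $ P_3 $ case the robber's reachable set after his move is exactly $ \{ a, c, v \} $, and probing the leaf $ a $ next resolves these three by the respective distances $ 0, 2, 1 $. After any distance-$ 2 $ response the cop re-probes $ v $, which either catches the robber at $ v $ or confirms he is back inside a not-yet-examined component. Iterating through every component of $ G \setminus \{ v \} $, the cop must eventually locate the robber, because otherwise the final possibility set would be empty.

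The main technical step---really just bookkeeping---is to track the possibility set $ L $ after each probe and the reachable set $ M $ after each robber move, verifying that the described probes really do partition $ L $ as claimed and that the re-probes of $ v $ correctly reset the state to \emph{robber confined to the not-yet-examined components}. The key feature making this routine is that, by Lemma \ref{SCLLeafComponentsSparse}, every component of $ G \setminus \{ v \} $ has at most three vertices of very restricted shape, so each component is resolved in at most two probes.
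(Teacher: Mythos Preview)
Your proposal is correct and follows essentially the same approach as the paper: use Lemma~\ref{SCLLeafComponentsSparse} to reduce each leaf component to $K_1$, $K_2$, or $P_3$, then sweep the components one by one, probing a canonical vertex in each and interleaving re-probes at $v$ to keep the robber confined. The paper orders the sweep by component type (leaves, then pending triangles, then $P_3$'s) and omits the $v$-reprobe between leaf probes, but these are cosmetic differences; the core strategy and the resolving probe for the narrowed $P_3$ case are identical.
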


\begin{proof}
Let $ G $ be as in the statement of the lemma, $ v $ any vertex of degree $ n-1 $. By Lemma \ref{SCLLeafComponentsSparse}, no leaf component of $ G $ relative to $ v $ contains more than two edges. \\
The cop applies the following strategy. She first probes $ v $ and then, in turn, all leaves of $ G $. Following this, she probes all pending triangles in turn, in the sense that she chooses one of its vertices that are not $ v $ and probes it, followed by a probe at $ v $. Finally, the cop then proceeds to probe all leaf components that are paths of length $ 2 $ in a similar way as she did with the triangles, this time by probing the middle vertex and after each such probe probing the central vertex $ v $ again, until the robber returns distance $ 1 $. If the robber returns distance $ 1 $, the cop then probes one of the neighbours of the last probing vertex other than $ v $. \\
It is easy to see by checking the possible results after each probe that this strategy is a winning strategy for the cop: Due to the fact that every second probe is done at $ v $, the robber can never leave the component he started in, and the way the components are probed in turn is chosen such that he is located once his component is probed, which we leave to the reader to verify.
\end{proof}

Note that a graph that has a vertex of degree $ n-1 $ and contains a copy of $ K_{3,3} $ as a subgraph will automatically contain a copy of $ K_{2} \oplus E_{3} $ (and also a copy of $ C_{5} $), as it needs to have at least two edges added in one of the parts of the partition. This explains the we did not need to, and hence did not, include $ K_{3,3} $ as a forbidden subgraph in the above statement. Before we continue to prove Lemma \ref{SCLDiam2Contains}, we note another simple result, that is along the lines for bipartite graphs in \cite{HJK14}, but including the no-backtrack condition.

\begin{Lemma}
\label{SCLK_2mLocatable}
For all $ m \in \N $, the graph $ K_{2,m} $ is locatable.
\end{Lemma}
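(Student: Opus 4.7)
The plan is to exhibit an explicit winning cop strategy on $K_{2,m}$. Denote the two vertices on the small side of the bipartition by $a, b$ and the vertices on the large side by $x_1, \ldots, x_m$. The distance structure is fully determined: $d(a,b) = 2$, $d(x_i, x_j) = 2$ for $i \neq j$, and every remaining non-diagonal pair is at distance $1$. Consequently, probing a central vertex (i.e. $a$ or $b$) returns $0$ or $2$ (both of which locate the robber) or $1$ (robber somewhere in $\{x_1,\ldots,x_m\}$), while probing a leaf $x_k$ returns $0$ (located at $x_k$), $1$ (robber on a neighbour, hence at some central vertex), or $2$ (robber at some $x_j \neq x_k$).

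The cop's strategy is to probe the interleaved sequence
\[
a,\, x_1,\, b,\, x_2,\, a,\, x_3,\, b,\, x_4,\, a,\, x_5, \ldots
\]
alternating central probes (which themselves alternate between $a$ and $b$) with fresh leaf probes. I would prove by induction on $k \geq 1$ the following invariant: provided the robber has not yet been located, the set $M$ of possible positions just before the $k$-th leaf probe (at $x_k$) has the form $\{c_k\} \cup \{x_k, x_{k+1}, \ldots, x_m\}$, where $c_k \in \{a,b\}$ is the unique central vertex \emph{not} probed immediately before, and the set $M$ just before the subsequent central probe has the form $\{a, b\} \cup \{x_{k+1}, \ldots, x_m\}$.

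The verification is a routine case analysis driven by the distance structure. If a central probe returns distance $1$, then $L$ coincides with the current candidate set of leaves, and the no-backtrack rule forbids the robber from moving onto the probed central vertex, so exactly one of $\{a,b\}$ remains in $M$; if a leaf probe $x_k$ returns distance $2$, then $L$ shrinks to $\{x_{k+1},\ldots,x_m\}$ and the ensuing move may restore both $a$ and $b$ to $M$. The main (though mild) obstacle lies in the interaction with no-backtrack: the robber can bounce between $a$ and $b$ via any $x_i$ in two steps, so without alternating the central probes the set $M \cap \{a,b\}$ would repeatedly regain both members and the strategy would stall. The alternation ensures that just before each leaf probe at most one of $\{a,b\}$ lies in $M$, so that distance $1$ at $x_k$ uniquely identifies the central vertex. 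Since the candidate set of leaves shrinks by one per pair of probes, by the $(2m)$-th probe at the latest $M$ has been reduced to a single vertex and the cop has located the robber, proving $K_{2,m}$ locatable.
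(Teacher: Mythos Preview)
Your proof is correct and follows essentially the same approach as the paper: interleave central probes with leaf probes so that, thanks to no-backtrack, only one of $\{a,b\}$ is live when each leaf is queried, and the candidate leaf set shrinks by one per cycle. The paper's version is marginally simpler in that it always probes $a$ on odd turns rather than alternating $a$ and $b$; your remark that without alternation ``the strategy would stall'' is not quite right---probing $a$ every time still leaves $M\cap\{a,b\}=\{b\}$ before each leaf probe---but this does not affect the validity of your argument.
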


\begin{proof}
Let $ a $, $ b $ be the vertices in the class of size $ 2 $ and $ c_{1}, \ldots, c_{m} $ the vertices in the class of size $ m $. The following sequence of probes will at some point locate the robber independantly of his moves: Probe at $ a $ at every odd move, and at $ c_{1}, \ldots, c_{m} $ in order at every even move. It is easy to check that this never allows the robber to move to $ a $ or $ b $ without being located, effectively forcing him to stay at some vertex $ c_{k} $ until he is found there.
\end{proof}

We will now use Lemma \ref{SCLStarGraphsLocatable} to deduce the full Lemma \ref{SCLDiam2Contains}.

\begin{proof}[\textup{\textbf{Proof} (of Lemma \ref{SCLDiam2Contains}):}]
Let $ G $ be a graph with $ \diam(G) = 2 $ that does not contain $ C_{5} $, $ K_{4} $, $ K_{2} \oplus E_{3} $ or $ K_{3,3} $ as a subgraph. We will show that $ G $ is locatable by deducing more and more of the possible structure of $ G $ and invoking the previous result where applicable. Let $ v \in G $ be any vertex. Let $ G_{1} = G[N(v)] $ be the graph spanned by the neighbourhood of $ v $ and $ G_{2} = G[G \setminus N[v]] $ the graph spanned by the set of vertices at distance $ 2 $ from $ v $. Without loss of generality assume that $ G_{2} \neq \emptyset $ -- otherwise the result follows immediately from Lemma \ref{SCLStarGraphsLocatable}.

\begin{enumerate}

\item Firstly consider the case that there are no edges within $ G_{2} $. This implies that $ A = G_{2} \cup \{ v \} $ also spans no edges. We will further distinguish three subcases depending on the structure of $ G_{1} $, as follows.

\begin{enumerate}

\item \emph{$ G_{1} $ contains no edges.} Thus $ G $ is bipartite. It must be complete bipartite, as otherwise any two vertices in $ A $ and $ G_{1} $ respectively that are not connected have distance at least $ 3 $. Hence, as $ G $ contains no $ K_{3,3} $, we deduce that either $ G = K_{1,m} $ or $ G = K_{2,m} $ for some $ m \in \N $, all of which are known to be locatable -- for example by Lemmas \ref{SCLStarGraphsLocatable} and \ref{SCLK_2mLocatable}, respectively.

\item \emph{$ G_{1} $ is connected.} As $ v $ is adjacent to all of $ G_{1} $ by definition, it follows by Lemma \ref{SCLLeafComponentsSparse} that $ G_{1} $ spans a path $ P $ of length at most $ 2 $. \\
The case $ P = P_{0} $ being a single vertex leads back to the case that $ G $ has maximum degree $ n-1 $ and is thus locatable by Lemma \ref{SCLStarGraphsLocatable}. \\
In the case $ P = P_{1} $, let $ G_{1} = \{ x, y \} $. If either of $ N(x) $ and $ N(y) $ contains the other, without loss of generality $ N(x) \supseteq N(y) $, then $ G $ again has a vertex of degree $ n-1 $, which this time is $ x $, so Lemma \ref{SCLStarGraphsLocatable} again applies and implies locatability. So assume now that there exist $ s, t \in G_{2} $ with $ s \sim x $, $ s \nsim y $, $ t \sim y $ and $ t \nsim x $. But then, as $ G_2 $ spans no edges, $ d(s,t) \geq 3 $, a contradiction. \\
Thus assume now that $ P = P_{2} $. Let $ x, y $ be the endpoints of $ P $ and $ z $ the centre. If all vertices in $ G_{2} $ are connected to $ z $, $ G $ again has a vertex of degree $ n-1 $, so assume that this is not the case. Moreover any vertex $ w $ from $ G_{2} $ that is connected to $ x $ but not to $ z $ must also be connected to $ y $, as otherwise we have $ d(w,y) \geq 3 $, and vice versa. But then the vertices $ v, z, x, w, y $, in that order, form a $ C_{5} $, a contradiction.

\item \emph{$ G_{1} $ is not connected but contains some edges.} Let $ w \in G_{2} $. Note that $ w $ has to have a neighbour in every component of $ G_{1} $, as if it has no neighbour in a component $ C $, the distance between $ w $ and $ C $ is at least $ 3 $. As $ G_{1} $ contains at least one edge, this implies that there exist $ x, y, z \in G_{1} $ such that $ w \sim x $, $ w \sim y $, $ x \sim z $. Thus, again, the vertices $ v, z, x, w, y $, in that order, form a $ C_{5} $, a contradiction. This finishes the case that $ G_{2} $ contains no edges.

\end{enumerate}

\item Now assume that there are edges in $ G_{2} $. Firstly, if there exists an edge $ (w,x) $ in $ G_{2} $ such that its endpoints have distinct neighbours $ y, z $ in $ G_{1} $, then the vertices $ v, z, x, w, y $ form a $ C_{5} $ in $ G $, contradiction. Thus we can now assume that for any edge in $ G_{2} $, its endpoints have precisely one neighbour in $ G_{1} $, which is the same for both. But if any two edges in $ G_{2} $ have a different vertex they are connected to, the endpoints of these edges would be at least distance $ 3 $ apart, a contradiction. So all edges are connected to the same vertex $ w \in G_{1} $. Moreover, if any $ z \in G_{1} $ is not connected to $ w $, we have $ d(w,G_{2}) \geq 3 $, again a contradiction. Thus, $ w $ has degree $ n-1 $, and thus $ G $ is locatable by Lemma \ref{SCLStarGraphsLocatable}. This finishes the proof and establishes the necessity of the condition in Theorem \ref{SCLDiam2Char}.

\end{enumerate}

\end{proof}

For the sake of completeness, we note that the observations made in this section also allow us to give a complete characterisation of all locatable graphs of diameter at most $ 2 $. This can easily be extracted from the previous proofs, and we will finish the section with this characterisation.

\begin{Corollary}
\label{SCLDiam2LocStructure}
Let $ G $ be a locatable graph of diameter $ 2 $. Then either $ G $ is spanned by a star with centre $ v $, such that for any leaf component $ C $ of $ G $ (with respect to $ v $), $ C \in \{ P_{0}, P_{1}, P_{2} \} $, or $ G = K_{2,m} $ for some $ m \in \N $.
\end{Corollary}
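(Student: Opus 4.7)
The plan is to extract the structural conclusion from the case analysis already carried out in the proof of Lemma \ref{SCLDiam2Contains}, since that proof effectively exhausts the possibilities for a diameter-$2$ graph avoiding $C_{5}$, $K_{4}$, $K_{2} \oplus E_{3}$ and $K_{3,3}$. By Theorem \ref{SCLDiam2Char}, any locatable diameter-$2$ graph $G$ is precisely such a graph, so it suffices to read off which structural conclusions survived in each non-contradictory branch of that proof.

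I would pick an arbitrary vertex $v$ and revisit the dichotomy used there, splitting on whether $G_{2} = G[G \setminus N[v]]$ is empty or non-empty, and, when it is non-empty, on the presence of edges in $G_{2}$ and on the structure of $G_{1} = G[N(v)]$. When $G_{2} = \emptyset$, $v$ itself has degree $n-1$, so $G$ already falls into the first listed case. In case 1(a) of the proof of Lemma \ref{SCLDiam2Contains}, the only surviving outcomes were $G = K_{1,m}$ (which has a vertex of degree $n-1$, namely the centre, with every leaf component a $P_{0}$) or $G = K_{2,m}$ (the second listed case). In case 1(b) each sub-case either produced a contradiction or forced a vertex of degree $n-1$; in case 1(c) only contradictions arose; and in case 2 the only surviving sub-case delivered a vertex $w \in G_{1}$ of degree $n-1$. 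Therefore every locatable diameter-$2$ graph is either of the form $K_{2,m}$ or contains a vertex of degree $n-1$.

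In the latter situation, I would then invoke Lemma \ref{SCLLeafComponentsSparse} applied to the vertex $v$ of degree $n-1$: each leaf component $C$ of $G$ relative to $v$ is connected and contains at most $2$ edges, and the proof of that lemma in fact shows that $C$ induces a path of length at most $2$ (the triangle $K_{3}$ is ruled out because $G$ contains no $K_{4}$, and $K_{1,3}$ because $G$ contains no $K_{2} \oplus E_{3}$). Hence each leaf component is in $\{ P_{0}, P_{1}, P_{2} \}$, which is exactly the statement of the first case of the corollary.

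The main obstacle is bookkeeping rather than substance: one must verify that no branch of the proof of Lemma \ref{SCLDiam2Contains} terminated in a structure genuinely different from the two listed (in particular, that the $K_{1,m}$ outcome of case 1(a) is correctly absorbed into the first case via its star centre, and that the overlap $K_{2,1} = P_{3}$, which fits both cases, is not a problem because the stated alternative is inclusive). Once this is checked, the corollary follows directly from the proof of Lemma \ref{SCLDiam2Contains} combined with Lemma \ref{SCLLeafComponentsSparse}.
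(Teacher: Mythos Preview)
Your proposal is correct and follows exactly the approach the paper intends: the paper does not give a separate proof of this corollary but simply remarks that it ``can easily be extracted from the previous proofs,'' and you carry out precisely that extraction by tracing the surviving (non-contradictory) branches of the proof of Lemma~\ref{SCLDiam2Contains} and then invoking Lemma~\ref{SCLLeafComponentsSparse}. There is nothing to add.
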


\section{Non-locatability Criteria and Colourability}
\label{SCLColourabilitySection}

We will next present a result establishing a very large family of non-locatable graphs limited purely by a restriction on small degrees of its vertices and by the structure of certain subgraphs. The main result of this section, in the sense that it motivated and triggered all the work that is presented here, is the following, for the deduction of which a partial result regarding the aforementioned non-locatable graphs is sufficient. 

\begin{Theorem}
\label{SCLColourability}
Every locatable graph is $ 4 $-colourable.
\end{Theorem}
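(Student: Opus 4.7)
The plan is to prove Theorem \ref{SCLColourability} by contrapositive: show that any graph $G$ with $\chi(G) \geq 5$ must be non-locatable. The first step is to reduce to a $5$-critical subgraph $H \subseteq G$, that is, a $5$-chromatic subgraph all of whose proper subgraphs are $4$-colourable; such an $H$ exists by taking a minimal $5$-chromatic subgraph. A standard fact from critical graph theory then guarantees $\delta(H) \geq 4$. It would suffice to show that every such $H$ is a hideout graph in the sense of Section \ref{SCLNoForbiddenCharacterisationSection}, since hideout status is inherited by supergraphs and would immediately force $G$ to be non-locatable.

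The intended route is then to invoke the section's main non-locatability criterion, announced just before this theorem, which produces non-locatable graphs from a minimum-degree bound together with a structural condition on certain subgraphs. For the $4$-colourability consequence only a partial version of that criterion is needed, just enough to cover $5$-critical $H$. The relevant features here are $\delta(H) \geq 4$ together with the rigidity forced by criticality: every edge of $H$ lies in a $5$-chromatic subgraph, which precludes the thin ``pendant'' configurations (isolated small leaf components as in Lemma \ref{SCLLeafComponentsSparse}, or dominating vertices as in Lemma \ref{SCLStarGraphsLocatable}) that made the cop's job feasible in the diameter-$2$ case.

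Concretely, the robber strategy will maintain the invariant that after each probe his set of possible locations is contained in $V(H)$ and has cardinality at least two. Given a probe $p$, he chooses his reported distance so that the set $L$ of consistent locations in $H$ has size at least two, and then moves inside $H$ to a successor set $M \subseteq N_H[L] \setminus \{p\}$ of size at least two, respecting the no-backtrack condition. The bound $\delta(H) \geq 4$ supplies the required number of onward moves, and the absence of small ``escape-blocking'' configurations, enforced by $5$-criticality, prevents the cop from whittling $L$ down to a singleton.

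The main obstacle is that, unlike in the diameter-$2$ setting, one cannot simply exhibit a fixed small hideout such as $K_4$, $C_5$, or $K_2 \oplus E_3$ inside $H$: there are $5$-chromatic graphs of arbitrarily large girth and no complete subgraph beyond $K_3$, so the argument cannot reduce to locating one of the known small hideouts. The evasion strategy must be intrinsic to the minimum-degree bound together with the global $5$-criticality of $H$, rather than built around the presence of any prescribed small subgraph, and verifying that every possible cop response leaves the robber with a valid next move is where the real work of the upcoming non-locatability result lies.
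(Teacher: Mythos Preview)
Your overall plan matches the paper's proof exactly: argue by contrapositive, pass to a minimal non-$4$-colourable (i.e., $5$-critical) subgraph $H$, observe $\delta(H)\geq 4$, and then invoke the hideout criterion to conclude that $G$ is non-locatable.

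However, you overcomplicate the final step. The ``partial result'' the paper actually uses is Theorem~\ref{SCLMinDeg4Hideouts}, which states unconditionally that \emph{every} graph with $\delta(H)\geq 4$ is a hideout graph --- no additional structural hypothesis is required. The $5$-criticality of $H$ is used solely to obtain $\delta(H)\geq 4$; once that bound is in hand, criticality plays no further role whatsoever. Your discussion of ``rigidity forced by criticality,'' ``escape-blocking configurations,'' and high-girth $5$-chromatic graphs is therefore unnecessary and somewhat misleading: the robber's evasion strategy in the proof of Theorem~\ref{SCLMinDeg4Hideouts} depends only on the minimum-degree bound, via the choice-variance machinery of Lemmas~\ref{SCLVarianceBiggerThanDiameter} and~\ref{SCLChoiceVariance}, and not on any chromatic property of $H$. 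The structural condition on diamond subgraphs that you allude to belongs to the stronger Theorem~\ref{SCLMinDeg3HideoutCondition} (minimum degree $3$), which is not needed here.
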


As we will show by means of an example at the end of this section, this result is best possible in the sense that there exist locatable graphs that can not be coloured with $ 3 $ colours. Locatable graphs that are not $ 3 $-colourable, however, have very specific substructures, and we make a first step towards a characterisation of those graphs in Theorem \ref{SCLMinDeg3HideoutCondition}. For the results we prove on the way, proofs will often be similar, and we will thus start this section by roughly describing the general outline of most of the non-locatability proofs we are going to present. \\

We will very often invoke the following observation, which we will refer to as the \emph{subset argument}, in order to prove non-locatability, by showing that even if the robber restricts himself to some subgraph of the graph and never leaves it, the cop cannot locate him inside it.

\begin{Lemma}
\label{SCLSubsetArgument}
Let $ G $ be a graph and $ A, B $ subsets of $ V(G) $ such that $ A \subseteq B $. Then if the robber wins from starting in $ A $, he also wins from starting in $ B $.
\end{Lemma}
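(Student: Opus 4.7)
The plan is to argue that any winning robber strategy starting from the smaller candidate set $ A $ transfers verbatim to one starting from the larger candidate set $ B $, since the cop has strictly less information in the $ B $-game.

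First I would fix a winning robber strategy $ \sigma $ for the game with initial candidate set $ A $, which prescribes, as a function of the cop's probe history, both the robber's actual vertex (hence his truthful distance reply) and his subsequent move. I then propose that the robber play $ \sigma $ in the $ B $-game starting at the same initial vertex, which lies in $ A \subseteq B $ and is therefore legitimate. Against any fixed cop strategy $ \tau $, running it against $ \sigma $ in both games produces the same sequence of probes, replies, and robber moves, because $ \tau $ is a function of the history and the history is identical in the two games.

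The key step is a one-line induction on the round number $ k $: writing $ L_{A}^{k} $ and $ L_{B}^{k} $ for the cop's candidate sets after $ k $ rounds in the respective games, I claim $ L_{A}^{k} \subseteq L_{B}^{k} $. The base case $ L_{A}^{0} = A \subseteq B = L_{B}^{0} $ holds by hypothesis. A probe restricts each candidate set to the vertices at the announced distance from the probe, which preserves inclusion as the announced distance is identical in both games; a robber move then replaces each candidate set by $ N[L] \setminus \{ p \} $ (using no-backtrack, with $ p $ the last probe), which again preserves inclusion. Combining this invariant with $ |L_{A}^{k}| \geq 2 $ for all $ k $, which is what it means for $ \sigma $ to be winning on $ A $, yields $ |L_{B}^{k}| \geq 2 $ for all $ k $; as $ \tau $ was arbitrary, $ \sigma $ is a winning robber strategy in the $ B $-game.

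I do not expect any real obstacle; the argument is essentially a bookkeeping exercise that captures the intuitive monotonicity behind the name \emph{subset argument}. The only point to be careful about is that $ \sigma $ is winning against every cop strategy, so that we may invoke it uniformly against whichever cop strategy we face in the $ B $-game.
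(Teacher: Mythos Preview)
Your argument is correct. The paper, however, does not actually give a proof of this lemma: it is stated as an observation and immediately used, with the surrounding text calling it ``the \emph{subset argument}'' and moving on. So there is nothing to compare against beyond noting that the authors regard the statement as self-evident.

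Your write-up spells out exactly the monotonicity that the paper leaves implicit. The inductive invariant $ L_{A}^{k} \subseteq L_{B}^{k} $ under the two update rules (intersecting with a distance sphere, then passing to $ N[\cdot] \setminus \{p\} $) is the right formalisation, and the reduction to a fixed cop probe sequence is handled correctly once one observes that any cop strategy in the $ B $-game, being a function of the visible probe/reply history, is also a legal cop strategy in the $ A $-game and is therefore beaten by $ \sigma $ there. Your closing caveat about $ \sigma $ being winning against \emph{every} cop strategy is precisely the point that makes this transfer go through.
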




For example, whenever we want to show that a graph containing some graph as an (induced) subgraph is not locatable, we will assume that the robber restricts himself to this subgraph for the whole game. After having established that the robber restricts himself to a given sugraph $ H $, there are some very similar ideas how to show non-locatability in this context which we will use regularly. One way, which we have encountered earlier, is to show that there is a family $ \mathcal{A} $ of subgraphs of $ H $ such that whenever he is in one of those sets, whereever the cop probes, the robber can then move in such a way that he can again be in a superset of a graph $ F \in \mathcal{A} $. The existence of such a family is sufficient for non-locatability by Lemma \ref{SCLSubsetArgument}. Proofs of this type have also appeared in \cite{HJK14} by the authors and J. Haslegrave. This strategy gives fully constructive proofs which depict closely what is going on when the actual game is played. Even though the subgraphs in $ \mathcal{A} $ might never appear when both sides play their optimal strategy, all sets appearing are supersets of sets in $ \mathcal{A} $, so such a proof contains the essence of what is going on on these graphs. As is intuitive and as we have seen, these proofs often have an inductive nature. \\

Proofs along these lines will not appear in this section -- their disadvantage is that due to their very specific nature, they can only capture small families of very similar graphs at a time. A more general approach is to show that after every probe of the cop, although we can not guarantee some given subgraph for the robber, we can guarantee him a certain amount of control over the choice of vertices where he could be. At the simplest level, this just means that the robber will always be able to return a distance such that there are two possible vertices for him. In this context, we draw the reader's attention to Lemma \ref{SCLChoiceVariance}, which will do precisely that. This approach frees us from needing information about specific subgraphs that are required to appear and so allows us to make stronger statements. \\

One such statement is the following theorem, which forbids a large group of graphs as subgraphs in locatable graphs and allows us to deduce Theorem \ref{SCLColourability}.

\begin{Theorem}
\label{SCLMinDeg4Hideouts}
Let $ H $ be a graph with minimal degree $ \delta(H) \geq 4 $. Then $ H $ is a hideout graph.
\end{Theorem}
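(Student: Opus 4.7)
The plan is to apply the subset argument (Lemma~\ref{SCLSubsetArgument}) and commit the robber to a strategy that never leaves $V(H)$, then to show that a simple invariant on the cop's set of possible robber locations can be maintained indefinitely. Let $G$ be any graph containing $H$ as a subgraph. The invariant I would track is as follows: after the $t$-th probe $p_{t}$ and the robber's subsequent move, the set $R_{t} \subseteq V(H)$ of possible robber locations contains a ``punctured closed neighbourhood'' $N_{H}[v_{t}] \setminus \{p_{t}\}$ for some vertex $v_{t} \in V(H)$. Since $\delta(H) \geq 4$, we have $|N_{H}[v_{t}]| \geq 5$, so even after removing $p_{t}$ (forced out by the no-backtrack condition) this set still has at least four elements. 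The base case is immediate: before any probes, $R_{0} = V(H)$ contains $N_{H}[v_{0}]$ for any $v_{0} \in V(H)$.

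For the inductive step, suppose the invariant holds at time $t$ and the cop next probes $p_{t+1}$. Since each vertex of $N_{H}(v_{t})$ is adjacent to $v_{t}$ in $G$, the values $d_{G}(y, p_{t+1})$ for $y \in N_{H}[v_{t}]$ span at most three integers, namely $d_{G}(v_{t}, p_{t+1}) - 1$, $d_{G}(v_{t}, p_{t+1})$, and $d_{G}(v_{t}, p_{t+1}) + 1$. Distributing at least four vertices of $N_{H}[v_{t}] \setminus \{p_{t}\}$ into at most three distance classes forces some class to contain at least two elements, so the robber has a truthful response $d_{t+1}$ for which the refined set $R_{t+1}^{-}$ of vertices of $R_{t}$ at distance $d_{t+1}$ from $p_{t+1}$ still has size at least two, and the cop cannot identify him on this probe. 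This step is in the spirit of, and for this setup essentially is, the promised Lemma on choice variance referenced above.

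It remains to verify that after the robber's move, the invariant is re-established. Pick any $v_{t+1} \in R_{t+1}^{-}$; since the robber occupies one of the vertices in $R_{t+1}^{-}$ and may move along an edge of $H$ or stay still (except that he cannot land on $p_{t+1}$), the cop's updated uncertainty set satisfies $R_{t+1} \supseteq N_{H}[v_{t+1}] \setminus \{p_{t+1}\}$, and by the degree assumption this is again of size at least four. The induction therefore closes, so the robber is never located, $G$ is non-locatable, and $H$ is a hideout graph.

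The main point requiring care is the pigeonhole count itself: in particular, confirming that the bound of three distance classes holds including boundary cases (for instance when $v_{t} = p_{t+1}$, so that only two classes are present and the argument becomes easier), and that the no-backtrack deletion of $p_{t}$ shrinks the neighbourhood by at most one. The proof is ultimately driven by the fact that minimum degree four is exactly the threshold at which $|N_{H}[v_{t}]| - 1 \geq 4 > 3$, allowing a closed-neighbourhood witness to outrun the cop's three-valued distance refinement regardless of where she probes.
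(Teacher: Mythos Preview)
Your argument is correct and is in fact cleaner than the paper's. Both proofs commit the robber to $H$ and maintain an inductive invariant guaranteeing at least two consistent locations after each probe, but the invariants differ. The paper tracks a \emph{pair} $\{x,y\}\subseteq H$ of equidistant locations, forms $M=\Gamma_H[x,y]\setminus\{p\}$, and then runs a three-way case analysis on $D=d_H(x,y)$ to verify $\ch(M)\geq 1$ via bounds on $|M|$ and $\diam(M)$. You instead track a \emph{single} witness vertex $v_t$ with $N_H[v_t]\setminus\{p_t\}\subseteq R_t$, observe that a closed neighbourhood realises at most three distinct distances to any probe, and finish by pure pigeonhole ($4>3$). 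This sidesteps the case split entirely and makes the role of the threshold $\delta(H)\geq 4$ completely transparent. The paper's more elaborate two-point machinery pays off later, in Theorem~\ref{SCLMinDeg3HideoutCondition}, where the invariant must survive with $\delta(H)=3$ under a structural side condition on diamonds; your single-vertex invariant would not directly extend there, since $|N_H[v]\setminus\{p\}|$ can drop to $3$ and the pigeonhole count fails.
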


Before presenting this proof, we first establish an idea and some notation attached to it which will be useful both for the proof of this result and the upcoming similar but more complicated result Theorem \ref{SCLMinDeg3HideoutCondition}. Moreover, we also hope that the concept will prove useful in the development of further results along the lines of what we are about to present in this section. \\

For what follows, let $ H $ be a graph and $ G $ a connected graph containing $ H $ as a subgraph. For $ v \in G $, let $ D_{v}(H) = \{ d \in \N \mid \exists \ x \in H: d(v,x) = d \} $ be the set of possible distances between $ v $ and vertices of $ H $, and define the \emph{distance variance} of $ H $ in $ G $ as $ \Var_{G}{H} = \underset{v \in G}{\max} \vert D_{v}(H) \vert $, the maximum over the sizes of those for all vertices of $ G $. Furthermore, define the \emph{choice variance} of $ H $ in $ G $ as $ \ch_{G}(H) = \vert H \vert - \Var_{G}(H) $. If the underlying graph $ G $ is clear from the context, we will often drop the indices. \\

We make the following easy observation for the case that $ H $ is connected, which allows us to find bounds on the distance variance from bounds on the diameter. Despite not being very deep, it will turn out to be very useful, as the situations in which we work with distance variance later are only ever complicated to handle for connected $ H $ anyway. 

\begin{Lemma}
\label{SCLVarianceBiggerThanDiameter}
Let $ G $ be a graph, $ H $ a connected subgraph. Then $ \Var_{G}(H) \leq \diam(H) + 1 $.
\end{Lemma}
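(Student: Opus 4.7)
The plan is to prove this by a direct application of the triangle inequality in $ G $, combined with the simple observation that distances in the subgraph $ H $ dominate those in $ G $. The core intuition is that vertices of $ H $ lie in a small $ G $-distance window around any fixed $ v \in G $, because $ H $ itself has bounded intrinsic diameter.

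More concretely, fix any $ v \in G $ and suppose $ D_{v}(H) = \{ d_{1} < d_{2} < \ldots < d_{k} \} $. First I would pick witnesses $ x_{1}, x_{k} \in H $ with $ d_{G}(v, x_{1}) = d_{1} $ and $ d_{G}(v, x_{k}) = d_{k} $. By the triangle inequality in $ G $, one has $ d_{G}(x_{1}, x_{k}) \geq d_{k} - d_{1} $. Next I would invoke the fact that $ H $ is a connected subgraph of $ G $, so any path in $ H $ between two vertices is also a walk in $ G $, giving $ d_{G}(x_{1}, x_{k}) \leq d_{H}(x_{1}, x_{k}) \leq \diam(H) $. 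Combining these inequalities yields $ d_{k} - d_{1} \leq \diam(H) $. Since the $ d_{i} $ are $ k $ distinct non-negative integers we also have $ d_{k} - d_{1} \geq k - 1 $, whence $ k \leq \diam(H) + 1 $. Taking the maximum over $ v \in G $ establishes $ \Var_{G}(H) \leq \diam(H) + 1 $.

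There is essentially no obstacle here; the only point that might require a brief word is the inequality $ d_{G}(x, y) \leq d_{H}(x, y) $ for $ x, y \in H $, which uses that $ H $ is a subgraph of $ G $ and that $ H $ is connected (so that $ d_{H}(x, y) $ is finite, indeed at most $ \diam(H) $). Everything else is bookkeeping with a single triangle inequality. The reason this easy lemma is worth stating is, as the authors indicate, that the later arguments about choice variance work with connected subgraphs of small diameter, so bounding the distance variance via the diameter will be the operational route to controlling $ \ch_{G}(H) $.
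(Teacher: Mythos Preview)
Your proof is correct and follows essentially the same idea as the paper's: both arguments bound the number of distinct distances by observing that all of $H$ lies within a $G$-distance window of width at most $\diam(H)$ around any fixed vertex. The paper compresses this into a single sentence (invoking ``the number of vertices on a longest path in $H$''), while you have spelled out the triangle-inequality step and the subgraph inequality $d_{G}\leq d_{H}$ explicitly; the content is the same.
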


\begin{proof}
The choice variance (by means of bounding the values it is maximised over) is at most the number of vertices on a longest path in $ H $, which is $ \diam(H) + 1 $.
\end{proof}

Let us understand why the concepts of distance variance and choice variance will be useful for us. The distance variance of $ H $ in $ G $ is an upper bound on the number of vertices of $ H $ on any shortest path between any two vertices of $ G $ (or, de facto, of $ H $). As such, it represents the maximal possible number of distinct distances between a probing position and the vertices of $ H $ the cop can achieve in his next move. \\

With this knowledge, it is also easy to make sense of the notion of choice variance. Clearly, we always have $ \Var_{G}(H) \leq \vert H \vert $, and thus $ \ch_{G}(H)\geq 0 $. Whenever $ \ch_{G}(H) = 0 $, equality holds, i.e. $ \Var_{G}(H) = \vert H \vert $, which means that there is some vertex $ v \in G $ such that any distance from $ v $ to a vertex in $ H $ is unique. This implies that if it is known that the robber is on $ H $, probing at $ v $ will locate the robber. Conversely, whenever $ \ch_{G}(H) > 0 $, the knowledge that the robber is in $ H $ will not allow the cop to immediately locate him, as there is always a choice for the robber to return a distance which is not unique between $ v $ and vertices of $ H $. The bigger the choice variance, the more choice the robber has -- either he can choose between a lot of non-unique distances, or he can choose some distance such that there are many vertices in $ H $ at this distance, or anything inbetween. The most obvious, but not the only application, is the following, which we will refer to as the \emph{sufficient choice lemma}. The proof is straightforward and along the lines of the discussion in this paragraph.


\begin{Lemma}
\label{SCLChoiceVariance}
Let $ G $ be a graph such that the robber has a strategy where after every move, the set of his possible locations, $ H $, satisfies $ \ch_{G}(H) \geq 1 $. Then $ G $ is non-locatable. 
\end{Lemma}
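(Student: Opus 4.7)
The plan is to unpack the definition of choice variance and show that the hypothesis directly forces $|H|\ge 2$ at every stage of the game, from which non-locatability follows immediately. There is essentially no combinatorial content beyond definition-chasing, once one chooses the right stages at which to apply the invariant.

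First I would record the trivial monotonicity that for any nonempty $H\subseteq V(G)$ one has $\Var_G(H)\ge 1$, since for any chosen $v\in H$ the value $d(v,v)=0$ lies in $D_v(H)$. Combined with the hypothesis, this yields $|H|\ge \Var_G(H)+1\ge 2$. Next I would fix a robber strategy as guaranteed by the hypothesis and induct on the number of rounds played, maintaining that the cop's set $H$ of possible locations satisfies $\ch_G(H)\ge 1$, and in particular $|H|\ge 2$, after every update. The only non-trivial check in the inductive step is that the cop's probe itself cannot collapse $H$ to a singleton: if she probes some vertex $p$, then by definition $|D_p(H)|\le \Var_G(H)\le |H|-1$, so by pigeonhole some distance $d^\ast$ is realised by at least two vertices of $H$. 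The robber's strategy then returns such a distance, keeping the post-probe set $L$ of size at least two, and his subsequent move yields a post-move set $M = N[L]\setminus\{p\}$ which again satisfies $\ch_G(M)\ge 1$ by the hypothesised invariant.

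The main obstacle is purely bookkeeping rather than mathematical: one must distinguish the two possible-location sets within each round -- the set $L$ immediately after the cop's probe and the set $M$ after the robber's subsequent move -- and verify that the invariant propagates through both. Under the natural reading that $\ch_G\ge 1$ holds after every update of the possible-location set, the pigeonhole step above handles the probe stage and the assumed strategy handles the move stage. Consequently $|H|$ never drops below two throughout the game, so the cop can never identify a unique vertex as the robber's location, and $G$ is non-locatable.
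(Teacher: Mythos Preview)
Your proof is correct and follows essentially the same approach as the paper: both argue that $\ch_G(H)\ge 1$ forces, via pigeonhole on $|D_p(H)|\le |H|-1$, a non-unique distance at every probe, so the robber is never located. The paper compresses this into a single sentence pointing back to the discussion preceding the lemma, whereas you spell out the pigeonhole step and the distinction between the post-probe set $L$ and the post-move set $M$ explicitly; this extra bookkeeping is helpful but not a genuinely different argument.
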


\begin{proof}
By the arguments made above, $ \ch_{G}(H) \geq 1 $ implies that the robber has two vertices to choose in $ H $ whereever the cop probes, and thus by induction he will never be caught.
\end{proof}

Having now established the concept of choice variance, we will continue to prove Theorem \ref{SCLMinDeg4Hideouts}, where this concept will come to its first use.

\begin{proof}(of Theorem \ref{SCLMinDeg4Hideouts})
Let $ H $ be as above, and let $ G $ be a graph containing a copy of $ H $ as a subgraph. We will show that the robber has a winning strategy where he never leaves $ H $ by showing that after every probe the robber can choose to return a distance such that he has at least two possible locations at this distance from the probing vertex, which implies non-locatability by Lemma \ref{SCLChoiceVariance}. We prove this inductively, and it is clearly true before the game started, so assume after some probe of the cop at $ p \in G $ the robber returns distance $ d $ and there are $ x, y \in H $ such that $ d(p,x) = d = d(p,y) $. Then the robber can move and thus claim to be in the graph $ M $ spanned by $ \Gamma_{H}[x,y] \setminus \{ p \} $. Let $ D = d(x,y) $ be the distance between the two locations. Note that it suffices to show that whatever the configuration, for all common neighbours $ z $ of $ x $ and $ y $, $ M_{z} = \Gamma_{H}[x,y] \setminus \{ z \} $ satisfies $ \ch(M_{z}) \geq 2 $, where $ z $ is taking the role of potential probing positions $ p $, as the choice variance with $ z $ included can not be smaller. The sufficient choice argument, Lemma \ref{SCLChoiceVariance}, will then imply the induction hypothesis for the next step. At this point, we would like to remind the reader of Lemma \ref{SCLVarianceBiggerThanDiameter} to help us get easy bounds on the distance variance. If $ x $ and $ y $ do not have a common neighbour, the same needs to be shown for $ M $ itself. We will distinguish $ 3 $ cases for the distance $ D $.

\begin{enumerate}

\item Firstly assume $ D = 1 $. Then $ \diam(M) \leq 3 $. If $ x $ and $ y $ do not have a common neighbour, $ \vert M \vert \geq 7 $ and sufficient choice applies. Thus assume now there is such a neighbour $ z $. Still, the sufficient choice argument implies right away that the induction statement is true -- noting that when $ \vert M_{z} \vert = 4 $ we have $ \diam(M_{z}) = 2 $. 

\item Next, let $ D = 2 $. This implies that we have $ \Gamma_{H}(x) \cap \Gamma_{H}(y) \neq \varnothing $, but $ x \nsim y $. Let $ z $ be a common neighbour. We thus have $ \diam(M_{z}) \leq 4 $ and the sufficient choice argument applies if $ \vert M_{z} \vert \geq 6 $. Otherwise, $ \vert M_{z} \vert \leq 5 $, i.e. $ M_{z} $ must induce a $ K_{2,3} $, where sufficient choice clearly, again, takes effect.

\item Finally, $ D \geq 3 $. In this case, $ x $ and $ y $ are sufficiently far away to yield $ \Gamma_{H}(x) \cap \Gamma_{H}(y) = \varnothing $, and thus $ \vert M \vert \geq 10 $. Moreover, the diameter of each of the two neighbourhoods is at most $ 2 $, so $ \Var(M) \leq 6 $, implying $ \ch(M) \geq 4 $. Hence we are done by sufficient choice, finishing the proof.

\end{enumerate}

\end{proof}

We will now present the proof of Theorem \ref{SCLColourability}, which follows easily.

\begin{proof} (of Theorem \ref{SCLColourability})
Let $ G $ be a graph that is not $ 4 $-colourable, and let $ H $ be a minimal not $ 4 $-colourable subgraph of $ G $. Clearly, $ H $ has minimal degree at least $ 4 $. Otherwise, take a vertex $ x $ of lesser degree, remove it, colour the resulting graph in $ 4 $ colours, which is possible by minimality, and colour $ x $ in a colour none of its less than $ 4 $ neighbours have. An application of Theorem \ref{SCLMinDeg3HideoutCondition} now implies that $ G $ is not locatable, as required.
\end{proof}

Next, we will present a result that further generalises the above, but requires a substantially larger amount of work. The core idea of this proof is already contained in the proof of Theorem \ref{SCLMinDeg4Hideouts}, but, due to the large number of possible cases arising, is considerably longer and more technical. The motivation behind what follows was the failed attempt to prove that all locatable graphs are $ 3 $-colourable, which, as we will see from an example shortly, turns out to be false. The proof of the following result thus contains most of the techniques that were used to attempt and achieve this result. Before we now turn our attention to this, we will define a few specific small graphs that will appear in the proof. \\

We call the graph obtained by deleting one edge of a $ K_{4} $ the \emph{diamond graph}. Moreover, call the degree $ 3 $ vertices \emph{girdle vertices} of a diamond graph and the degree $ 2 $ vertices \emph{tip vertices}. Let the \emph{kite graph} be a diamond with an additional pending edge on one of the tip vertices, whereas the \emph{dart graph} is a diamond graph with an additional pending edge on one of the girdle vertices. The \emph{double dart graph} is the dart graph with the edge connecting the two girdle vertices of the contained diamond being subdivided once. Finally, the \emph{bull graph} is a triangle with two pending edges, and the \emph{watch graph} is a $ C_{4} $ with two pending edges at non-adjacent vertices. These graphs are shown in Figure \ref{SCLSmallGraphDefinitionsFigure} below.

\begin{figure}[ht] \centering
  \begin{tikzpicture}
	\tikzstyle{vertex}=[draw, shape=circle, minimum size=5pt, inner sep=0]
	\foreach \x/\y/\label in {-4.0/2.5/A1, -4.5/2.0/A2, -4.0/1.5/A3, -3.5/2.0/A4, 0.0/2.5/B1, -0.5/2.0/B2, 0.0/1.5/B3, 0.5/2.0/B4, 0/0.79/B5, 3.5/2.0/C1, 4.0/2.5/C2, 4.5/2.0/C3, 4.0/1.5/C4, 2.79/2.0/C5, -4.0/-1.5/D1, -4.5/-2.0/D2, -4.0/-2.5/D3, -3.5/-2.0/D4, -5.21/-2.0/D5, -4.0/-2.0/D6, -0.5/-2.0/E1, 0.0/-2.87/E2, 0.5/-2.0/E3, -1.21/-1.21/E4, 1.21/-1.21/E5, 4.0/-2.5/F1, 3.5/-2.0/F2, 4.0/-1.5/F3, 4.5/-2.0/F4, 2.79/-2.0/F5, 5.21/-2.0/F6}
	{\node[vertex] (\label) at (\x, \y) {};
		}
	\foreach \from/\to in {A1/A2, A2/A3, A3/A4, A4/A1, A2/A4, B1/B2, B2/B3, B3/B4, B4/B1, B2/B4, B3/B5, C1/C2, C2/C3, C3/C4, C4/C1, C1/C3, C1/C5, D1/D2, D2/D3, D3/D4, D4/D1, D2/D5, D2/D6, D4/D6, E1/E2, E2/E3, E3/E1, E1/E4, E3/E5, F1/F2, F2/F3, F3/F4, F4/F1, F2/F5, F4/F6}
	{\draw (\from) -- (\to);
		}
	\node at (-4, 0.3) {Diamond};
	\node at (0, 0.3) {Kite};
	\node at (4, 0.3) {Dart};
	\node at (-4, -3.7) {Double-Dart};
	\node at (0, -3.7) {Bull};
	\node at (4, -3.7) {Watch};
  \end{tikzpicture}
  \caption{Graphs required in the proof of Theorem \ref{SCLMinDeg3HideoutCondition}}
  \label{SCLSmallGraphDefinitionsFigure}
\end{figure}
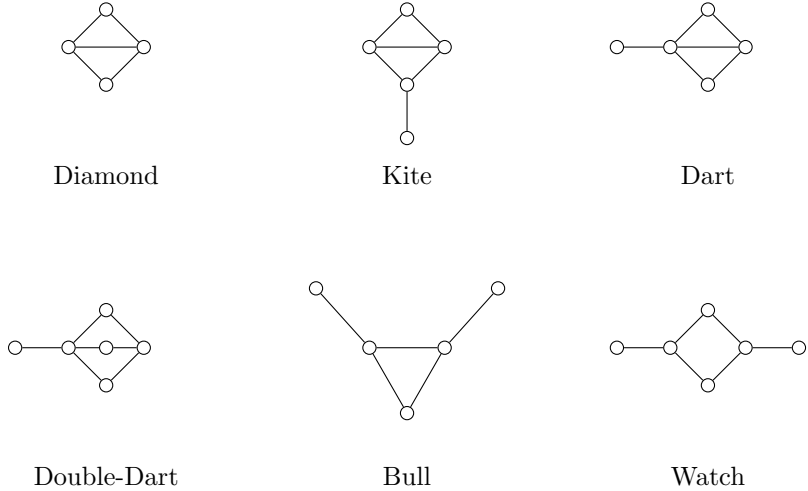




With these in mind, we will now prove the following. 

\begin{Theorem}
\label{SCLMinDeg3HideoutCondition}
Let $ H $ be a graph with minimal degree $ \delta(H) \geq 3 $ such that in every diamond-subgraph, either both girdle or both tip vertices have degree at least $ 4 $. Then $ H $ is a hideout graph.
\end{Theorem}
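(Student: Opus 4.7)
Proof plan. The plan is to extend the strategy of Theorem~\ref{SCLMinDeg4Hideouts} to the weaker minimum-degree-3 setting. Given a graph $G$ containing $H$ as a subgraph, the robber confines himself to $H$ and maintains, inductively, that his current possible location set $M \subseteq H$ satisfies $\ch_G(M) \geq 1$; by the sufficient choice lemma (Lemma~\ref{SCLChoiceVariance}) this implies $G$ is non-locatable. As in the degree-4 proof, the inductive step reduces to the worst case in which, following a probe at $p$, the robber returns a distance $d$ with $L_d = \{x, y\}$ of size exactly two, and then moves to $M = N_H[\{x, y\}] \setminus \{p\}$, whose choice variance we wish to bound from below. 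We then split on $D = d_H(x, y)$.

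For $D \geq 3$ the closed neighbourhoods of $x, y$ are disjoint, $|M| \geq 7$, and the diameter bound from Lemma~\ref{SCLVarianceBiggerThanDiameter} closes the case exactly as before. For $D = 2$ the analysis splits on the number $k$ of common neighbours of $x, y$: the sub-case $k = 2$ with adjacent common neighbours produces a diamond with $x, y$ as tips, and it is here that the diamond hypothesis first enters. Either both tips have degree at least $4$, enlarging $M$ directly, or both girdles do, in which case the robber can alternatively use the pair $(z_1, z_2)$ of girdles (at a common distance from $p$ by their symmetric position) to obtain a richer post-move set. Cases with $k \geq 3$ common neighbours force the presence of $K_4$, $K_{3,3}$ or $K_2 \oplus E_3$, and we invoke Lemmas~\ref{SCLSeagerLemma2}, \ref{SCLK_33Hideout}, and~\ref{SCLDiam2MinHideouts} directly.

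The genuinely new difficulty occurs for $D = 1$ with exactly one common neighbour $z$. In the tight sub-sub-case where $\deg_H(x) = \deg_H(y) = 3$ (larger degrees give the result immediately), $\Gamma_H[\{x,y\}] = \{x, y, z, a, b\}$ is the vertex set of a bull, and a probe at $p = z$ collapses this to $M_z = \{x, y, a, b\}$. If $a \sim b$ this is $C_4$ of diameter $2$ and $\ch(M_z) \geq 1$, but if $a \not\sim b$ this is $P_4$ and the naive bound only gives $\ch(M_z) \geq 0$. This is where the diamond hypothesis is crucial: an edge $a \sim z$ would make $\{x, y, z, a\}$ a diamond with girdles $x, z$ and tips $y, a$, all four of degree $3$ in this sub-sub-case, violating the hypothesis; hence $a \not\sim z$, and symmetrically $b \not\sim z$. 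Consequently $d_G(z, a) = d_G(z, b) = 2$, and the robber returns $d = 2$ rather than $d = 1$, provided his pre-probe set $L$ contains $a$ and $b$; he then moves from $\{a, b\}$ to a set of size at least $6$ from which the inductive invariant is readily restored. An analogous use of the diamond hypothesis handles the remaining case $k = 2$ with non-adjacent common neighbours, where $\{x, y, z_1, z_2\}$ is a diamond with $x, y$ as girdles.

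The principal obstacle will be the careful bookkeeping required to ensure that the pre-probe set $L$ always contains the alternative vertices needed to switch the returned distance whenever the primary choice is tight. This forces us to strengthen the inductive invariant beyond the bare $\ch_G(L) \geq 1$ to track also the local structure of $L$, which we expect to be captured by the small graphs (diamond, kite, dart, double-dart, bull, watch) defined just before the statement. Verifying the inductive transitions between these configurations, while repeatedly invoking the diamond hypothesis at the tight spots, will constitute the technical bulk of the proof.
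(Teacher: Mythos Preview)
Your plan is the same scheme the paper follows: maintain an inductive invariant stronger than bare $\ch \geq 1$, split the step on $D = d_H(x,y)$, and invoke the diamond hypothesis precisely at the tight spots. The paper's concrete invariant is a three-way disjunction on $(L,p,d)$---either $|L| \geq 3$; or $|L| = 2$ with $p \notin H$ or $d \geq 2$; or $|L| = 2$, $p \in H$, $d = 1$, and $\Gamma_H[L]$ spans a diamond---and the workhorse becomes a \emph{strong} sufficient choice claim ($\ch_H(M) \geq 2$), together with a separate structural-choice claim that classifies all $M$ with $\ch_H(M) = 1$ into seven explicit small shapes. This is cleaner than your distance-switching manoeuvre because it never needs to look back at the pre-probe set: the paper simply shows that the bad configuration $|L|=2$, $p\in H$, $d=1$, $\Gamma_H[L]$ not a diamond is never reached from any of the three allowed states.

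Two details in your sketch need repair. First, from $a \not\sim_H z$ you cannot conclude $d_G(z,a) = 2$, since $G$ may contain extra edges; this particular instance is harmless (if $d_G(z,a) = 1$ the robber returns $d = 1$ and already has $|L| \geq 3$), but the same slip undermines your ``symmetric position'' claim that the girdle pair $(z_1,z_2)$ is equidistant from $p$---symmetry in $H$ says nothing about $G$-distances. Second, for $D=2$ with $k \geq 3$ common neighbours the configuration is merely $K_{2,3}$ (or larger), which need not contain $K_4$, $K_{3,3}$, or $K_2 \oplus E_3$; it is not disposed of by the hideout lemmas but directly by sufficient choice, since $|M| \geq 5$ and $\diam(M) \leq 2$.
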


\begin{proof}
Let $ H $ be as above, and let $ G $ be a graph containing a copy of $ H $ as a subgraph. We will show that the robber has a winning strategy where he never leaves $ H $. This will again be done inductively by showing that after every probe of the cop, the robber cannot be located. More specifically, we will show the following statement. \\

Given a probed vertex $ p $, the robber can choose a distance $ d $ and thus a set $ L $ of possible locations in $ H $, such that $ p $, $ d $ and $ L $  satisfy one of the following conditions.

\begin{enumerate}
\item \label{SCLMinDeg3RobberStrati} $ \vert L \vert \geq 3 $, or
\item \label{SCLMinDeg3RobberStratii} $ \vert L \vert = 2 $ and either $ p \notin H $ or $ d \geq 2 $, or
\item \label{SCLMinDeg3RobberStratiii} $ \vert L \vert = 2 $, $p \in H $, $ d = 1 $ and $ \Gamma_{H}[L] $ induces a diamond graph.
\end{enumerate}

This will imply that the robber can then not be located and can thus move. Clearly, the above statement is true before the game starts. Let us hence turn our attention directly to the induction step. The idea is that we would like to apply Lemma \ref{SCLChoiceVariance} again, as we did in Theorem \ref{SCLMinDeg3HideoutCondition}. However, this is not sufficient in this case, as it would not yield a strong enough assumption to successfully complete the inductive step. Instead, we will use the two following two lemmas, which are clearly along the lines of Lemma \ref{SCLChoiceVariance}, the sufficient choice argument. Despite them being very obvious, we state them for future reference, and will start with some notation. For the remainder of the proof, as done before, denote by $ M $ the set of vertices to which the robber can move after having been in $ L $ -- that is, $ M = \Gamma_{H}[L] \setminus \{ p \} $ due to the no-backtracking condition. When referring to the probe following $ p $, we will use $ p^{+} $ for the probing vertex, $ d^{+} $ for the distance choosen by the robber and $ L^{+} $ for his resulting set of possibilities. \\

\begin{Claim}
\label{SCLSufficientChoice}
If $ \ch_{H}(M) \geq 2 $, for all probing vertices $ p^{+} $ the robber can choose $ d^{+} $ and thus $ L^{+} $ such that the induction statement holds for the next step.
\end{Claim}

\begin{proof}
Let $ M $ be as above. Whichever vertex $ p^{+} $ the cop probes next, either two distances to $ L $ appear at least twice, and thus one distance which is not $ 1 $, implying Situation \ref{SCLMinDeg3RobberStratii} in the induction statement, or some distance appears at least three times -- this implies Situation \ref{SCLMinDeg3RobberStrati}.
\end{proof}

This lemma will be used frequently in this proof, and we will refer to it by the \emph{strong sufficient choice argument}. Another argument is the very similar \emph{structural choice argument}, which is the following. 

\begin{Claim}
\label{SCLStructuralChoice}
If $ \ch_{H}(M) = 1 $,  for all probing vertices $ p^{+} $ the robber can choose $ d^{+} $ and thus $ L^{+} $ such that the induction statement holds for the next step.
\end{Claim}

\begin{proof}
Let $ M $ be as above. Then $ M $ is not a geodesic path, but there exists some vertex $ w \in H $ such that the $ m = \vert M \vert $ vertices of $ M $ are at $ m - 1 $ different distances from $ w $ -- so all but one vertex of $ M $ lie on some geodesic path in $ H $. The fact that $ H $ has minimum degree $ 3 $ restricts the possibilities for $ M $ hugely, and a quick thought shows that $ M $ must be one of the seven graphs shown in Figure \ref{SCLMCandidatesFigure}.

\begin{figure}[h!] \centering
  \begin{tikzpicture}
	\tikzstyle{vertex}=[draw, shape=circle, minimum size=5pt, inner sep=0]
	\tikzstyle{blackvertex}=[draw, shape=circle, fill=black, minimum size=5pt, inner sep=0]
	\foreach \x\y\name in {0.13/4.5/A1, 1.0/6.0/A2, 1.87/4.5/A4, 3.3/4.3/B1, 4.0/6.0/B2, 4.8/4.3/B4, 6.3/5.0/C1, 7.7/5.0/C4, 9.1/5.0/D1, 9.8/5.7/D2, 11.5/5.0/D5, 5.9/1.13/E2, 4.69/2.79/E4, 7.11/2.79/E5, 0.3/2.0/F1, 2.0/2.7/F3, 2.0/1.3/F4, 3.7/2.0/F6, 8.1/2.0/G1, 9.8/2.7/G3, 9.8/1.3/G4, 11.5/2.0/G6}
	{
	 \node[vertex] (\name) at (\x, \y) {};
	 }
	 
	\foreach \x\y\name in {1.0/5.0/A3, 4.0/5.0/B3, 7.0/5.7/C2, 7.0/4.3/C3, 9.8/4.3/D3, 10.5/5.0/D4, 5.4/2.0/E1, 6.4/2.0/E3, 1.3/2.0/F2, 2.7/2.0/F5, 9.1/2.0/G2, 10.5/2.0/G5}
	{
	\node[blackvertex] (\name) at (\x, \y) {};
	 }	 
	 
	\foreach \from/\to in {A1/A3, A2/A3, A3/A4, B1/B3, B2/B3, B1/B4, B3/B4, C1/C2, C1/C3, C2/C3, C2/C4, C3/C4, D1/D2, D1/D3, D2/D3, D2/D4, D3/D4, D4/D5, E1/E2, E2/E3, E1/E3, E1/E4, E3/E5, F1/F2, F2/F3, F2/F4, F3/F5, F4/F5, F5/F6, G1/G2, G2/G3, G2/G4, G3/G4, G3/G5, G4/G5, G5/G6}
	{\draw (\from) -- (\to);
		}
	\node at (0.0, 6.0) {A};
	\node at (3.0, 6.0) {B};
	\node at (6.0, 6.0) {C};
	\node at (9.0, 6.0) {D};
	\node at (1.0, 2.5) {E};
	\node at (5.9, 2.5) {F};
	\node at (8.8, 2.5) {G};
  \end{tikzpicture}
  \caption{Candidates for $M$. The minimal set $ L $ of positions leading to this shape (modulo automorphisms) is highlighted.}
  \label{SCLMCandidatesFigure}
\end{figure}
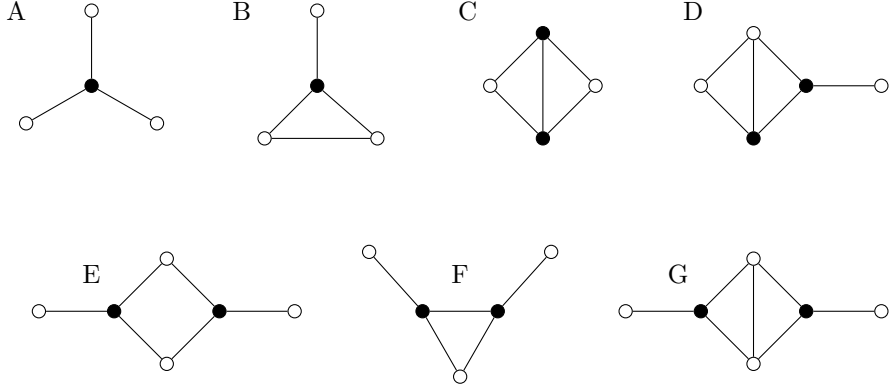

Clearly, $ M $ can not be one of the two graphs $ A $ and $ B $, as they can only originate from a set $ L $ of size $ 1 $, which would have meant that the robber got caught after the probe at $ p $, contraditing the assumption. If $ M $ is a diamond, $ C $, we arrive at case \ref{SCLMinDeg3RobberStratiii} in the induction hypothesis -- note that as $ M \subseteq \Gamma_{H}[L] $, we can assume equality here by the subset argument. $ M $ can moreover not be a kite graph, $ D $, as this would imply that the tip vertex of degree $ 3 $ and at least one girdle vertex of the contained diamond would have been in $ L $. But due to the condition on diamond subgraphs, at least one of them would have to have degree $ 4 $. Finally, if $ M $ is one of the three graphs $ E $, $ F $ and $ G $ in the bottom line, a little case analysis shows that whatever the next probing vertex $ p^{+} $ is, the robber can choose to return a distance such that on of Situations \ref{SCLMinDeg3RobberStrati} and \ref{SCLMinDeg3RobberStratii} from the induction hypothesis becomes true.
\end{proof}


Equipped with this machinery, we now return to establishing the induction step. Assume thus that the induction hypothesis is true after some probe of the cop at $ p \in G $, where the robber returned distance $ d $ and then claims to be in a set $ L $ meeting one of the above conditions. We will now show that the statement is also true after the next probe at $ p^{+} \in G $, by checking the three situations in turn.

\begin{enumerate}

\item We start with situation \ref{SCLMinDeg3RobberStrati}, i.e. $ \vert L \vert \geq 3 $, and will assume equality in the assumption, i.e. $ \vert L \vert = 3 $, by the subset argument. By noting that if $ p \notin H $ or $ d \geq 2 $, the robber can discard one of his potential locations, we may assume that $ p \in H $, $ d = 1 $, the remaining cases being covered by situation \ref{SCLMinDeg3RobberStratii} and thus by the following case. Let the three vertices in $ L $ be $ x $, $ y $ and $ z $. The robber then moves and could thus be in $ M $. Let $ M^{'} = H[M \cup \{ p \}] $. If $ L $ spans a triangle, $ M^{'} $, and thus $ G $, contains a $ K_{4} $, making $ G $ non-locatable. If $ L $ spans at most one edge, we either have $ \vert M \vert \geq 7 $ and $ \Var(M) = \diam(M^{'}) \leq 4 $, allowing the application of the strong sufficient choice argument, or $ H $ contains a $ C_{5} $, which implies non-locatability right away by Lemma \ref{SCLSmallHideouts}.

If $ L $ spans precisely two edges, without loss of generality $ x \sim z \sim y $, it is easy to see that $ p $, $ x $, $ y $ and $ z $ span a diamond. Thus, by the assumption on $ H $, at least two of those four vertices have degree at least $ 4 $. If any of the neighbours of these vertices outside the diamond coincide such that $ H $ contains a $ C_{5} $, non-locatability follows again. If this is not the case, the only neighbours that may still coincide are the potential additional neighbours of $ p $ and $ z $, in which case we get $ \vert M \vert \geq 7 $ by inspection, as well as once again $ \diam(M) \leq 4 $, allowing another application of the strong sufficient choice argument. \\

\item Next, we work under the assumption of situation \ref{SCLMinDeg3RobberStratii} that $ \vert L \vert = 2 $ and that either $ p \notin H $ or $ d \geq 2 $. Hence there are $ x, y \in H $ such that $ d(p,x) = d = d(p,y) $. Let $ D = d_{H}(x,y) $ be the distance between those vertices in $ H $. We will now further distinguish the cases $ D = 1 $, $ D = 2 $ and $ D \geq 3 $. Note that due to the assumption, the robber can be anywhere in $ V(M) = \Gamma_{H}(x) \cup \Gamma_{H}(y) $ after his next move. A careful reader will recognise this part of the proof as being very similar to the proof of Theorem \ref{SCLMinDeg4Hideouts}. \\

\begin{enumerate}

\item Assume $ D = 1 $ first. Then $ \diam(M) \leq 3 $ and thus whenever $ \vert M \vert \geq 6 $, the strong sufficient choice argument implies that the induction statement is true in this case. Thus assume $ \vert M \vert \leq 5 $ from now on. Then $ M $ is either spanned by a dart graph, a bull graph or a diamond graph. Firstly, we note that for some of those graphs, the induction hypothesis also holds by strong sufficient choice, which is the case for the dart graph and for some of the cases where $ M $ is spanned by a bull graph. The bull has $ 5 $ vertices and diameter $ 3 $ with two unique vertices, the bull's `horns', at maximal distance. So if there are edges inside $ M $ shortening this distance, strong sufficient choice takes effect again. The only cases where this does not happen are that $ M $ is either a bull or a kite. The bull graph has choice variance $ \ch(Bull) = 1 $, and, this time, structural choice implies the induction hypothesis, and $ M $ can not be a kite graph as outlined in the proof of Lemma \ref{SCLStructuralChoice}. Finally, if $ M $ is spanned by diamond graph, when the next probe $ p^{+} $ appears at a girdle vertex of it, we arrive at Situation \ref{SCLMinDeg3RobberStrati}, whereas every other probe $ p^{+} $ leads to Situation \ref{SCLMinDeg3RobberStratiii} claimed in the induction hypothesis. \\

\item Next, let $ D = 2 $. This means that we have $ \Gamma_{H}(x) \cap \Gamma_{H}(y) \neq \varnothing $, but $ x \nsim y $. We thus have $ \diam(M) \leq 4 $ and the strong sufficient choice argument finishes the argument for this case if $ \vert M \vert \geq 7 $. Otherwise, $ \vert M \vert \leq 6 $. Then $ M $ must be spanned by $ K_{2,3} $, $ K_{2,4} $, a double-dart or a watch. In the first three cases, strong sufficient choice takes effect again. In the final case, $ \ch_{H}(M) = 1 $ and by structural choice, the induction statement follows as well. \\

\item In the final subcase we have $ D \geq 3 $, $ x $ and $ y $ are sufficiently far away to yield $ \Gamma_{H}(x) \cap \Gamma_{H}(y) = \varnothing $ and thus $ \vert M \vert \geq 8 $. Moreover, the diameter of each of the two neighbourhoods is at most $ 2 $, so $ \Var(M) \leq 6 $, implying $ \ch(M) \geq 2 $. Hence we are done by strong sufficient choice in this case again. \\



\end{enumerate}

\item The third and last case works under the assumptions that $ \vert L \vert = 2 $, $ p \in H $, $ d = 1 $ and $ \Gamma_{H}[L] $ spans a diamond graph, which equates to situation \ref{SCLMinDeg3RobberStratiii}. Write again $ L = \{ x, y \} $. 


In this case, clearly $ x $ and $ y $ are the girdle vertices of the diamond, and thus, by the assumption, both tip vertices have degree at least $ 4 $, i.e. without loss of generality have $ 2 $ neighbours outside the diamond each -- if the tip vertices were connected, $ H $ would contain a $ K_{4} $, immediately implying non-locatability, so assume this is not the case. Moreover assume none of these neighbours coincide, as otherwise there would be a $ C_{5} $ inside $ H $, thus making $ H $ non-locatable as again. After his move, the robber can be in $ M = L \cup \{ z \} $ for the neighbour $ z \neq p $ of $ x $ and $ y $. It is easy to see that whenever the next probe $ p^{+} $ of the cop is not done at $ x $ or $ y $, we have $ d(p^{+},x) = d(p^{+},y) $, and this common distance can then be returned by the robber, resulting in $ \Gamma_{H}[L^{+}] $ being the same diamond for the next set of possibly locations $ L^{+} $ again. If the cop probes at $ x $ (or $ y $, by symmetry), the robber can return distance $ 1 $, potentially being in $ y $ or $ z $, resulting in $ \Gamma_{H}[L^{+}] $ being a diamond with at least two additional leaves on one of the tip vertices. The robber chooses all those additional leaves as the set $ L^{+} $, resulting in situation \ref{SCLMinDeg3RobberStrati} or \ref{SCLMinDeg3RobberStratii}.

\end{enumerate}

This shows that $ G $ is non-locatable, thus finishing the proof.
\end{proof}

The authors note that due to the assumptions in Theorem \ref{SCLMinDeg3HideoutCondition}, $ \Gamma_{H}[L] $ can never have the shape of a kite graph. However, the result can be shown to still hold if the kite was included as a possible subgraph spanned by two vertices in Theorem \ref{SCLMinDeg3HideoutCondition}. The details are however time-consuming and offer no additional insight, so we leave this as an exercise to the (very) interested reader.


We would also like to point out that Theorem \ref{SCLMinDeg3HideoutCondition} allows us to deduce a non-trivial result about the locatability of random graphs, an area which as of yet is completely unexplored and deserves further conisderation. We can easily deduce the following result.

\begin{Corollary}
\label{SCLRandomCubic}
A random cubic graph is non-locatable with high probability.
\end{Corollary}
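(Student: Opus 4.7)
The plan is to apply Theorem~\ref{SCLMinDeg3HideoutCondition} directly to the random cubic graph itself, taking $H = G$ in the notation of that theorem. The minimum degree hypothesis $\delta(H) \geq 3$ holds automatically for any cubic graph. The more subtle point is that, since no vertex of a cubic graph has degree $4$ or higher, the condition concerning diamond subgraphs in Theorem~\ref{SCLMinDeg3HideoutCondition} can only hold vacuously, i.e.\ only when the graph contains no diamond as a subgraph at all. So the entire problem reduces to a first-moment calculation showing that a random cubic graph contains no diamond with high probability.

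To carry this out, I would work in the standard configuration (pairing) model on $3n$ half-edges, appealing to its well-known contiguity with the uniform random simple $3$-regular graph in order to transfer subgraph-count asymptotics. The diamond has $4$ vertices and $5$ edges, so a routine first-moment computation gives that the expected number of diamond subgraphs is of order $n^{4} \cdot n^{-5} = O(n^{-1})$, which tends to zero. By Markov's inequality, with high probability a random cubic graph is diamond-free.

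Once diamond-freeness is established, Theorem~\ref{SCLMinDeg3HideoutCondition} yields that the graph is a hideout graph, which, since any graph contains itself as a subgraph, is equivalent to saying that it is non-locatable. The only real technical step is the first-moment calculation in the configuration model, which is standard but requires care in tracking the automorphism factor of the diamond and the pair-counting conventions in the pairing model; everything else follows essentially directly from the hypotheses of Theorem~\ref{SCLMinDeg3HideoutCondition}, and the apparent obstacle -- needing the diamond condition to hold in a graph where no vertex has degree four -- turns out to be no obstacle at all once one observes that the absence of diamonds makes the quantifier vacuous.
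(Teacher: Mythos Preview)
Your proposal is correct and follows essentially the same approach as the paper: show via a first-moment argument that a random cubic graph is diamond-free with high probability, then observe that the diamond condition in Theorem~\ref{SCLMinDeg3HideoutCondition} becomes vacuous, so the theorem applies. The paper's proof is terser (it simply asserts the first-moment step without writing out the configuration-model details or the $n^{4}\cdot n^{-5}$ estimate), but the logic is identical.
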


\begin{proof}
By an easy first moment argument, it can be seen that with high probability, a random cubic graph has no diamond subgraphs. Thus a random cubic graph with high probability trivially meets the condition of Theorem \ref{SCLMinDeg3HideoutCondition} and is thus a hideout graph.
\end{proof}

Restricting the set of cubic graphs under consideration appropriately might well result in a smaller family of graphs for which this is no longer the case, and finding such restrictions remains an open problem which we pose again in the open problems section \ref{SCLOpenProblemsSection}.

The following example however shows that Theorem \ref{SCLMinDeg3HideoutCondition} is best possible in some sense, in that it does no longer hold if only one vertex per diamond subgraph is required to have higher degree. Moreover, this example, which we refer to as the \textit{pretzel graph}, shown in Figure \ref{SCLPretzelGraphFigure}, is a locatable but not $ 3 $-colourable graph, thus also disproving our previous conjecture that all locatable graphs are $ 3 $-colourable. We do not claim that this is the smallest such example. 


\begin{figure}[h!] \centering
  \begin{tikzpicture}
	\tikzstyle{vertex}=[draw, shape=circle, minimum size=5pt, inner sep=0]
	\foreach \centrex/\centrey/\letter/\angle in {1.37/0/B/270, -2.91/-0.53/D/307.9, 2.73/-2.06/E/270, 0.00/-2.06/F/270, -2.73/-2.06/G/270}
	{\foreach \dist/\baseangle/\number in {0.87/90/1, 0.50/180/2, 0.5/0/3, 0.87/270/4}
	 {\pgfmathsetmacro{\xdist}{(\centrex + \dist*cos(\angle + \baseangle)}
		\pgfmathsetmacro{\ydist}{\centrey + \dist*sin(\angle + \baseangle)}
		\node[vertex](\letter\number) at (\xdist, \ydist) {};
	  }
		 }
	\foreach \letter in {B, D, E, F, G}
	{\foreach \from/\to in {1/2, 1/3, 2/3, 2/4, 3/4}
	 {\draw (\letter\from) -- (\letter\to);
	  }
	   }
	\foreach \centrex/\centrey/\letter/\angle in {2.91/-0.53/A/232.1, -1.37/0.00/C/270, 4.46/-2.43/H/0, 3.72/-3.74/I/301.1, 2.15/-3.93/J/252.8, 0.66/-3.12/K/230, -4.46/-2.43/L/0, -3.72/-3.74/M/58.9, -2.15/-3.93/N/107.2, -0.66/-3.12/O/130}
	{\foreach \dist/\baseangle/\number in {0.87/90/1, 0.50/180/2, 0.5/0/3}
	 {\pgfmathsetmacro{\xdist}{(\centrex + \dist*cos(\angle + \baseangle)}
		\pgfmathsetmacro{\ydist}{\centrey + \dist*sin(\angle + \baseangle)}
		\node[vertex](\letter\number) at (\xdist, \ydist) {};
	  }
		 }
	\foreach \letter in {A, C, H, I, J, K, L, M, N, O}
	{\foreach \from/\to in {1/2, 1/3, 2/3}
	 {\draw (\letter\from) -- (\letter\to);
	  }
	   }
	\foreach \from/\to in {A/B, C/D, H/I, I/J, J/K, L/M, M/N, N/O}
	{\draw (\from2) -- (\to1);
	 \draw (\from3) -- (\to1);
		}
	\foreach \from/\to in {A1/E1, A1/H1, E1/H1, D4/G4, D4/L1, G4/L1, B4/C1, E4/F1, F4/G1, K2/F3, K3/F3, O2/F3, O3/F3}
	{\draw (\from) -- (\to);
	 }
	\node at (-3.90, -1.56) {A};
	\node at (3.90, -1.56) {B};
	\node at (-4.46, -2.75) {C};
	\node at (4.46, -2.75) {D};
	\node at (-0.9, -1.8) {a};
	\node at (0.9, -1.8) {b};
	\node at (0.0, -3.0) {c};
	\node at (0.0, -1.3) {d};	
	\node at (0.0, 0.15) {e};
	\node at (-3.0, -4.5) {r};
	\node at (2.4, 0.2) {s};
	\node at (-4.7, -1.55) {u};
	\node at (-3.8, -1.0) {v};
	\node at (-3.25, -2.05) {w};
	\node at (4.7, -1.55) {x};
	\node at (3.8, -1.0) {y};
	\node at (3.25, -2.05) {z};
  \end{tikzpicture}
  \caption{The pretzel graph, an example for a locatable, not $ 3 $-colourable graph, with some labels used in the proofs of theorems \ref{SCLPretzel4Col} and \ref{SCLPretzelLoc}.}
  \label{SCLPretzelGraphFigure}
\end{figure}
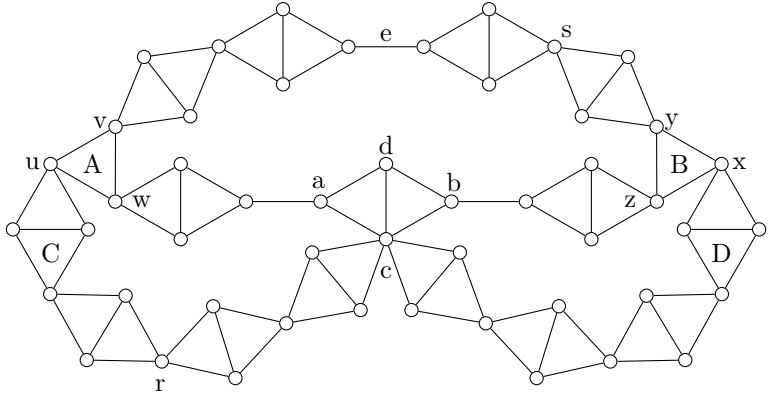

We will now show that the pretzel graph is indeed a valid counterexample to our original conjectureby proving the two properties we claim for it above, starting with a lower bound on the chromatic number -- the fact that $ 4 $ colour indeed suffice to colour it is easy to see (or follows from Theorem \ref{SCLColourability} and Theorem \ref{SCLPretzelLoc}).

\begin{Theorem}
\label{SCLPretzel4Col}
The pretzel graph is not $ 3 $-colourable.
\end{Theorem}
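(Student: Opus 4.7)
The plan is a proof by contradiction. Suppose $ c\colon V \to \{1, 2, 3\} $ is a proper $ 3 $-colouring of the pretzel graph. I will extract two elementary local constraints from the structure of the pretzel and propagate them around one of the lobes until a forced equality of colour between two adjacent vertices is reached.

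The first constraint is the standard triangle constraint: in every triangle of the pretzel, the three vertices must receive the three distinct colours. The second, and more useful, constraint concerns the diamond subgraphs (copies of $ K_{4} $ with one edge removed) that glue the pretzel together: in any such diamond with girdle vertices $ g_{1}, g_{2} $ and tip vertices $ t_{1}, t_{2} $, each set $ \{ g_{1}, g_{2}, t_{i} \} $ forms a triangle, and so the two tip vertices are forced to receive the same colour under $ c $, namely the colour not used on $ g_{1} $ or $ g_{2} $. These two observations should form the backbone of the argument.

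With these in hand, the bulk of the proof will consist of fixing the colours on a central triangle -- without loss of generality on the vertices $ a, b, c $, say by setting $ c(a) = 1, c(b) = 2, c(c) = 3 $ -- and then propagating colours outwards. Each time the propagation enters a diamond, the tip-equality forces one colour through the diamond; each time it enters a triangle in which the colour of two vertices is already determined, the third vertex's colour is fixed. One then follows this colour-propagation around a ring of diamonds and triangles in one lobe of the pretzel, returning to a vertex whose colour has already been assigned. Because the pretzel's lobes interleave diamonds and triangles in such a way that the `carried' colour is shifted by a triangle step and preserved by a diamond step, the returned colour will not match the originally assigned one, producing the required contradiction.

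The main obstacle will not be any single deep idea, but rather the combinatorial bookkeeping across the pretzel's many vertices. Rather than tracking the colour of every vertex, I would aim to identify a small substructure -- essentially an odd `ring' consisting of a few diamonds connected by shared triangle edges through one of the two lobes -- on which the alternating triangle/diamond constraints alone already force a parity conflict. Once this minimal obstruction is isolated, the proof reduces to checking a short sequence of forced colour assignments and observing that the final assignment violates either a triangle edge or a diamond edge of the pretzel, which shows that no proper $ 3 $-colouring of the pretzel graph can exist.
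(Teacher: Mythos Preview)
Your approach---using the diamond tip-equality constraint together with the triangle constraint, and propagating forced colours until a contradiction---is exactly what the paper does. However, your proposal stops short of identifying the actual obstruction, and your expectation that it can be confined to ``one of the two lobes'' is too optimistic and is where the plan would fail if carried out as described.

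In the paper's argument the contradiction genuinely requires the full global structure. One starts at the degree-$7$ vertex $c$ (coloured, say, red); tip-equality then pushes red along \emph{both} chains of diamonds emanating from $c$, reaching the vertices $u \in A$ and $x \in B$ on the two outer triangles. The remaining triangle vertices $v,w$ (on $A$) and $y,z$ (on $B$) must then use the other two colours. Now the edge $e$, which joins the far ends of the long diamond chains issuing from $v$ and $y$, forces $v$ and $y$ (and hence also $w$ and $z$) to receive \emph{different} colours---tip-equality would otherwise give both endpoints of $e$ the same colour. Only after this cross-lobe constraint is in place does one return to the central diamond and find that its two tip vertices $a$ and $b$ are forced into different colours, contradicting tip-equality there.

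So the obstruction is not a parity conflict on a single ring in one lobe: it threads through both outer triangles, the bridging edge $e$, and the central diamond simultaneously. Your plan has the right ingredients, but the ``combinatorial bookkeeping'' you defer is precisely the content of the proof, and the specific cycle you need is a global one, not a local one.
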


\begin{proof}
We will attempt to colour the pretzel graph in $ 3 $ colours, and will fail doing so. Note that when attempting to colour the graph with $ 3 $ colours, the two tip vertices of any given diamond subgraph must have the same colour. Start by colouring the unique vertex of degree $ 7 $, $ c $, red. Thus the tip vertices of all diamonds on the two chains of diamonds originating in $ c $ must then be red, which includes one vertex each on the two triangles $ A $ and $ B $, namely $ u $ and $ x $. None of the other vertices on $ A $ or $ B $ can thus be red. Now assume $ v $ and $ y $ have the same colour, blue say. This results in giving both vertices of the edge $ e $ the same colour, blue, and thus leads to a contradiction. Thus one of $ v $ and $ y $ is green, $ v $ say, the other one is blue, from which we can deduce that $ w $ and $ z $ also have distinct colours, the former being blue, the latter being green. This results in the two tip vertices of the central diamond, $ a $ and $ b $, being unable to have the same colour, as $ a $ cannot be blue, $ b $ cannot be green and none of them can be red, giving a contradiction, as we can then not colour $ d $ in any of the three colours.
\end{proof}

Next, we will show that the pretzel graph is locatable. To make the proof more accessible, we break the cop strategy down into multiple lemmas, and then show how the result can be deduced from this. To make the argument easier to follow, it is possible to imagine a larger pretzel graph instead, where each of the chains of diamonds (of which there are $ 6 $) can be made as long as desired, without changing the validity of the colourability argument above. In what follows, a \emph{chain of diamonds} within the pretzel graph is a connected subgraph of the pretzel graph consisting only of a path of diamond graphs, each connected to those adjacent to it through common tip vertices, and with no vertex of degree higher than $ 4 $ in the original graph. The \emph{end} of a chain of diamonds is a vertex of degree $ 2 $ in the subgraph. The following lemma shows that the robber can always be forced to move along a certain direction in such a chain of diamonds.

\begin{Lemma}
\label{SCLDiamondChainPush}
If the robber is know to be inside a chain of diamonds, he can be pushed out of the chain through either end of it. 
\end{Lemma}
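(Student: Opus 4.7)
By the left--right symmetry of a chain of diamonds it suffices to describe a strategy pushing the robber out through one chosen end, say the right. Label the chain so that it consists of diamonds $D_1, \ldots, D_k$ with common tips $t_0, t_1, \ldots, t_k$ (where $t_{i-1}$ and $t_i$ are the tips of $D_i$) and girdle pair $(g_i^1, g_i^2)$ in $D_i$; the two ends of the chain are $t_0$ and $t_k$. A single probe at $t_0$ already narrows the robber to a specific rung of the chain: a reply of $2i$ localises him at $t_i$, and a reply of $2i-1$ at one of $\{g_i^1, g_i^2\}$, unresolvable from this probe alone.

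The main step is to exhibit a push cycle consisting of a bounded number of consecutive probes that strictly increases the minimum possible distance of the robber from $t_0$. In the tip case (robber known to be at $t_i$) the cop probes $t_{i-1}$ and then $t_0$: the first probe either catches the robber or, by no-backtracking, forbids him from moving onto $t_{i-1}$, and the second probe certifies that his new distance from $t_0$ is at least $2i+1$. In the girdle case (robber at one of $g_i^1, g_i^2$), a slightly longer sequence --- probing $t_{i-1}$, then one of the two girdles, and then $t_0$ again --- either catches the robber along the way or uses no-backtracking together with the adjacency $g_i^1 \sim g_i^2$ and the shared tip neighbourhoods to certify that his distance from $t_0$ has grown. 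Iterating this cycle gives a monovariant bounded above by $2k$, so after a bounded number of rounds the robber either has already been caught or sits at $t_k$ with every chain neighbour ruled out by the most recent probes, forcing him to leave the chain through $t_k$ --- which is precisely the promised push.

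The main obstacle is the girdle ambiguity: $g_i^1$ and $g_i^2$ are indistinguishable from every probe outside $D_i$, and the robber may swap between them or stay put, threatening to defeat a naive monovariant. The essential idea for handling this is that the two girdles are adjacent and share both tip neighbours, so the diamond $D_i$ can be treated as a single thick rung of the chain: every move out of this rung lands the robber strictly further from $t_0$ or onto a vertex the cop has just probed, and every attempt to dwell on the rung can be countered by a single extra probe inside $D_i$ without losing the monovariant. Making this case analysis watertight --- in particular verifying that the robber cannot exploit the pair $g_i^1, g_i^2$ to slip back into $D_{i-1}$ --- is where the bulk of the work will lie.
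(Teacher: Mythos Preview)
Your three-probe push cycle in the girdle case --- probe the back-tip $t_{i-1}$, then a girdle $g_i^1$, then re-localise --- is exactly the paper's strategy; the only cosmetic difference is that the paper re-localises with $t_{i-1}$ again rather than with $t_0$, which is marginally more robust (internal chain vertices have no edges leaving the chain, so local distances cannot be distorted by shortcuts in $G$), but for the pretzel graph both choices work and the traces are identical.

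One confusion to excise: your ``tip case'' is not a case. A reply of $2i$ to the initial probe at $t_0$ pins the robber to the single vertex $t_i$, and the cop has already won --- that is what ``locating'' means in this game. (Your analysis of this phantom case is also off: $t_i$ is not adjacent to $t_{i-1}$, so probing $t_{i-1}$ imposes no no-backtrack constraint, and the robber could step back to $g_i^1$ or $g_i^2$ at distance $2i-1<2i$ from $t_0$, defeating your monovariant.) Simply drop the tip case; the girdle case is the entire content, and your sketch of it is correct and complete once you trace it through.
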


\begin{proof}
Assume the robber is inside a chain of diamonds. Let the cop probe one of its ends. This will either locate the robber or force him into a set of two neighbouring girdle vertices of some diamond. After his next move, the robber is in that diamond. 
Probing this diamond's tip vertex at the opposite end from the direction into which the cop wishes to push the robber forces the latter into the remaining three vertices of the same diamond (after moving), after which a probe at one of the girdle vertices forces him to claim to be in one of the two remaining vertices, all by the no-backtrack condition. The neighbourhood of this set has the shape of a path of length $ 4 $ with one pending edge, and a probe at the far end from the pending edge finally forces the robber into the next diamond, from where this cop strategy can be repeated. After finitely many moves, the robber has been pushed into the last diamond of the chain, and repeating the strategy once more, the cop can force him to leave this chain.
\end{proof}

A simple corollary show us which chains of diamonds will result in the robber being caught by the above lemma.

\begin{Corollary}
\label{SCLPretzelLocatableChains}
If the robber is known to be in a chain of diamonds one end of which is $ v $, $ w $, $ y $ or $ z $, the cop can locate him.
\end{Corollary}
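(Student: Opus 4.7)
The plan is to apply Lemma \ref{SCLDiamondChainPush} to push the robber through the specified end of the chain, and then to locate him by exploiting the restricted structure of the pretzel outside the chain.

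By hypothesis, one end of the chain is some $e_0 \in \{v, w, y, z\}$; by the symmetry of the pretzel graph it suffices to treat one case, say $e_0 = v$. Using the pushing procedure of Lemma \ref{SCLDiamondChainPush} in the direction of $v$, the cop forces the robber to leave the chain through $v$. After this exit, the robber's possible locations are confined to the neighbours of $v$ outside the chain, which, by inspection of Figure \ref{SCLPretzelGraphFigure}, are the other two vertices of the triangle $A$ containing $v$, together with the opposite endpoint of the bridge edge $e$. Hence after the push the robber is on at most three explicitly described candidate vertices.

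From this small candidate set, the cop concludes with a short sequence of distinguishing probes. A suitable probe (for example at a vertex just inside the chain opposite to the exit, or at $u$) returns pairwise distinct distances to the surviving candidates, and the no-backtrack condition forbids the robber to step back through $v$; thus the cop either identifies his vertex immediately or strictly reduces the candidate set. If the robber has escaped across the bridge edge $e$, he now lies in a chain of diamonds whose end is another vertex of $\{v, w, y, z\}$, and the entire argument repeats.

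The main obstacle will be ruling out that the robber could oscillate between the two triangles $A$ and $B$ via $e$ under successive pushes. This is resolved by interleaving probes of the unique degree-$7$ vertex $c$, whose distances separate the two halves of the pretzel and force a strict reduction of the robber's possibility set each time he crosses $e$, so that after finitely many iterations the cop determines his exact location.
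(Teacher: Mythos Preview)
You are pushing in the wrong direction. The paper's proof applies Lemma~\ref{SCLDiamondChainPush} to drive the robber \emph{away} from $v$, $w$, $y$ or $z$ and out through the \emph{other} end of the chain. That other end abuts either the bridge edge $e$ (for the chains from $v$ and $y$) or a single edge into the central diamond (for the chains from $w$ and $z$); in every case the robber is forced onto a geodesic path and is located by a single further probe. The whole proof is one sentence.

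Your proposal instead pushes the robber \emph{towards} the triangle end, and this creates two genuine problems. First, your description of the exit is geometrically incorrect: $v$ is a vertex of triangle $A$ and is \emph{not} adjacent to either endpoint of $e$; its neighbours outside the chain are exactly $u$ and $w$. The bridge $e$ sits at the far end of the chain, which is precisely why the paper pushes that way. Second, and more seriously, once the robber is in the triangle you are essentially in the situation of Lemma~\ref{SCLPretzelTrianglesCopGood}---and that lemma's proof \emph{invokes} Corollary~\ref{SCLPretzelLocatableChains}. So any attempt to handle the triangle here would be circular, and your ``interleave probes at $c$ to prevent oscillation'' sketch is neither rigorous nor clearly terminating. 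The fix is simply to reverse the push direction; the geodesic-path exit then makes the oscillation worry disappear entirely.
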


\begin{proof}
By pushing the robber away from $ v $, $ w $, $ y $ or $ z $ and through the other end of the chain of diamonds he is in, he will be forced into a geodesic path, on which he can be located.
\end{proof}

The next lemma captures the essence of what happens when the robber is forced to move into either of the triangles $ A $ or $ B $, and how the cop can control the direction in which he leaves it again, allowing her to capture the robber. Without loss of generality by symmetry, we concentrate on him moving into $ B $. It is actually sufficient for our purposes to consider a slightly weaker condition, assuming the robber to start in the slightly larger set $ N[y,z] $.

\begin{Lemma}
\label{SCLPretzelTrianglesCopGood}
If the robber is known to be inside $ N[y,z] $, he can be located.
\end{Lemma}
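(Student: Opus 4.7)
The plan is to exploit the fact that $N[y,z]$ is a small, almost-bounded region of the pretzel graph: it consists of the triangle $B$ (with vertices $x,y,z$), the vertex $v$ (reachable from $y$ across the bridging edge to triangle $A$), and the first few vertices of the two chains of diamonds emanating from $y$ and from $z$. The cop's goal is to funnel the robber, within a bounded number of probes, into one of these two chains with the correct orientation so that Corollary \ref{SCLPretzelLocatableChains} applies and concludes the argument.

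First I would probe $x$, the apex of triangle $B$. This returns distance $0$ (robber located at $x$), distance $1$ (robber on $y$, $z$, or one of $x$'s other immediate neighbours), or a larger distance placing him deeper in one of the two chains. The no-backtrack condition prevents him from moving onto $x$ in the next step, so after the robber moves the set $L$ is contained in a proper subset of the original $N[y,z]$ neighbourhood, broken up by distance from $x$. For each distance class, I would probe a second well-chosen vertex — typically either $y$, $z$, or a vertex one step into the appropriate chain — with the aim of splitting $L$ further. In each branch we either locate the robber outright (because the chosen probe has unique distances to the remaining candidates) or reduce $L$ to a set entirely inside one of the chains of diamonds ending at $y$ or $z$, at which point Corollary \ref{SCLPretzelLocatableChains} finishes the job.

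The main obstacle will be the edge connecting $y$ to $v$ on triangle $A$: this is the only way the robber can leak out of $N[y,z]$ into the left half of the pretzel, and handling this possibility requires care because the no-backtrack condition can be used by the robber to slip across this bridge during a probe that targets the right half. The remedy is to probe $y$ itself at the moment the robber's possible-location set would otherwise be consistent with both sides of the bridge; since $y$ has degree $4$ in the pretzel and its four neighbours lie in distinct ``directions'' ($z$, $x$, $v$, and the first girdle vertex of the chain at $y$), the probe at $y$ separates these directions either by forcing the robber to return a distance that identifies him uniquely, or by forcing him to move away from $v$ (since moving onto $y$ is forbidden and moving to $v$ would be inconsistent with the distance he has just announced). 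A parallel, simpler case analysis handles the probes centred on the $z$-chain, which has no bridge to the other side.

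Once these two or three probes have confined the robber to a chain of diamonds with $y$ or $z$ at one end, invoking Lemma \ref{SCLDiamondChainPush} and Corollary \ref{SCLPretzelLocatableChains} finishes the proof. The whole argument is thus a finite but somewhat tedious case check; the main creative content is the choice of the first probe at $x$ and the choice to use $y$ as a ``gatekeeper'' probe to block the bridge to triangle $A$.
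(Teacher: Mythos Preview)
Your overall plan---probe $x$ first, split on the returned distance, then funnel the robber into one of the two chains ending at $y$ or $z$ and invoke Corollary~\ref{SCLPretzelLocatableChains}---is exactly the paper's approach. The paper's concrete sequence is: probe $x$; on reply $2$, probe $b$ to decide which chain; on reply $1$, probe $z$, then $x$ again, then $b$.

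However, you have misread the pretzel graph. There is no ``bridging edge'' from $y$ to $v$: the vertices $u,v,w$ lie on the \emph{left} triangle $A$, while $x,y,z$ form the \emph{right} triangle $B$, and the two triangles are only connected through long chains of diamonds (via $c$, via the central diamond, or via the edge $e$). Consequently $v\notin N[y,z]$; the closed neighbourhood $N[y,z]$ consists just of $x,y,z$ together with the two pairs of girdle vertices that begin the chains leaving $y$ and $z$. The ``main obstacle'' you describe---the robber leaking across a bridge into triangle $A$---simply does not exist, and the gatekeeper probe at $y$ you design to block it is unnecessary. Once this misreading is corrected, the case analysis becomes shorter than you anticipate and collapses to essentially the paper's four-probe argument.
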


\begin{proof}
Assume the robber is inside $ N[y,z] $. The cop than probes $ x $, upon which the robber has two possible replies. If he replies $ 2 $, a follow-up probe at $ b $ will force him to commit to being in one of the two chains as required. If he replies $ 1 $, thus being in $ N[y,z] \setminus {x} $, the cop then probes $ z $. Again, possible answers by the robber are $ 1 $ and $ 2 $, the latter forcing him into the chain of diamonds ending in $ y $. If he replies $ 1 $, a follow-up probe in $ x $ forces him to reply $ 2 $ or be located, and as before, a probe at $ b $ then determines which chain of diamonds he moved into. In any case, the robber is known to be in a chain of diamonds ending in $ y $ or $ z $, allowing the cop to locate him by Corollary \ref{SCLPretzelLocatableChains}. 
\end{proof}

We now turn our attention to the core of the cop's strategy, with the only remaining difficulty occuring when the robber stays near the central degree-$ 7 $-vertex $ c $.

\begin{Theorem}
\label{SCLPretzelLoc}
The pretzel graph is locatable.
\end{Theorem}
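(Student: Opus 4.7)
The plan is to describe an explicit cop strategy that first probes the hub vertex $c$ to reduce the robber's possible locations to one of the six chains of diamonds or to a neighbourhood of one of the triangles $A$, $B$, and then invoke the preceding lemmas to finish the job. The key structural observation is that every chain of diamonds in the pretzel graph has one end at (or adjacent to) the central clasp around $c$, and the other end at one of the triangle vertices $v$, $w$, $y$, $z$, so the distance returned in response to a probe at $c$ localises the robber to a narrow distance-shell and hence essentially identifies which chain (or triangle region) he is in.

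First, I would have the cop probe $c$. If the distance returned is $0$ the robber is located. If the distance is large, the shell consists of vertices in or near the triangles, and in particular the robber lies in $N[v,w]$, in $N[y,z]$, or in the terminal diamond of one of the chains ending at these vertices; Lemma \ref{SCLPretzelTrianglesCopGood} (with its symmetric counterpart for the $A$-side) and Corollary \ref{SCLPretzelLocatableChains} then finish. For an intermediate distance, the corresponding distance-shell from $c$ meets several chains, so the cop follows up with one or two probes at carefully chosen vertices of the first diamonds of the chains meeting the shell; these disambiguate which chain the robber occupies. Once a chain is identified, Lemma \ref{SCLDiamondChainPush} pushes the robber towards the triangle end, and Corollary \ref{SCLPretzelLocatableChains} locates him there.

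The only remaining situation is when the probe at $c$ returns a small distance (namely $1$ or $2$), placing the robber in the central ``clasp'' spanned by $\{a,b,c,d,e\}$ together with the girdle vertices of the first diamond of each chain. Here I would use a short fixed sequence of probes at $d$, $e$ and the tip vertices of the central diamond to exploit the no-backtrack condition: each such probe either directly locates the robber (because from $c$, $d$ and $e$ together most of the clasp vertices have distinct distance signatures, so non-unique distances leave only two possibilities and the second probe breaks the tie) or forces the robber to move into a specific adjacent chain of diamonds, at which point we are back to the previous case.

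The main obstacle will be this central-clasp analysis. Because $c$ has degree $7$ and the clasp is locally dense, the robber has many legal moves and a single probe cannot pin him down; the argument has to be a small but delicate case analysis verifying that a short scripted sequence of probes, interleaving $c$ with $d$, $e$ and selected girdle vertices, forces a commitment to a definite chain (or an immediate capture) under every possible pattern of the robber's replies. Once this forcing step is carried out, the rest of the proof reduces cleanly to Lemma \ref{SCLDiamondChainPush}, Corollary \ref{SCLPretzelLocatableChains}, and Lemma \ref{SCLPretzelTrianglesCopGood}.
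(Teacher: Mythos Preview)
Your overall plan---probe $c$, split on the returned distance, and reduce to Lemmas \ref{SCLDiamondChainPush}, \ref{SCLPretzelTrianglesCopGood} and Corollary \ref{SCLPretzelLocatableChains}---is exactly the scaffold the paper uses. Two minor inaccuracies: $e$ is an \emph{edge} (the single edge at the top joining the two long upper chains), not a vertex you can probe; and the distance--to--region correspondence is inverted in your sketch (the largest distances $7$--$9$ from $c$ land near $e$, the triangles $A,B$ sit at intermediate distance~$6$).

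The real gap is in the central-clasp case (reply~$1$ to the probe at $c$). Your proposed probe set $\{d,\text{``}e\text{''},a,b\}$ cannot work, for a symmetry reason rather than a bookkeeping one. The seven neighbours of $c$ split as $\{a,b,d\}$ in the central diamond together with four vertices, two on each of the two lower chains. Every vertex you propose to probe ($d$, $a$, $b$) is adjacent to $c$ and has all four lower-chain neighbours of $c$ at distance exactly~$2$ (the only length-$2$ path goes through $c$). Hence no probe at $d$, $a$ or $b$ can distinguish the left lower chain from the right lower chain: the robber can sit on either pair and give identical replies, then fan out again. Your ``distinct distance signatures'' claim fails precisely here.

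The paper resolves this with a different manoeuvre that you are missing: instead of probing inside the clasp, the cop probes a vertex $r$ \emph{on one of the lower chains} (breaking the left/right symmetry), and then immediately re-probes $c$. The $r$-probe either commits the robber to that chain (whence Lemma \ref{SCLDiamondChainPush} pushes him to a triangle) or rules it out; the follow-up $c$-probe prevents him from drifting away while this is checked. Repeating with the mirror image of $r$ on the other lower chain and then $c$ once more eliminates both lower chains; if the robber still replies~$1$ to this final $c$-probe, his possible locations now lie on a geodesic path and one more probe finishes. The alternation ``chain vertex, then $c$'' is the key device your proposal lacks.
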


\begin{proof}
We describe the cop's winning strategy. Let her first probe be at $ c $. Should the robber return distance $7 $, $ 8 $ or $ 9 $, with $ 9 $ being the largest possible result, he must be in the two chains of diamonds comprising of four diamonds and the single edge $ e $ lying between $ v $ and $ y $. One more probe reveals which chain he is in, allowing the robber to locate him by Corollary \ref{SCLPretzelLocatableChains}. \\
If the robber returns distance $ 6 $ after the first probe, he can be in any vertex of $ C $ or $ D $, or the neighbours of $ v $ and $ y $ towards $ e $, and move to the neighbourhood of this set of vertices. Following probes at $ r $, and then if necessary $ z $ and $ s $ (as one of many possibilities), force the robber into a chain of diamonds, allowing the cop to chase him along the chain according to Lemma \ref{SCLDiamondChainPush}. This can happen either towards $ e $, resulting in capture by Corollary \ref{SCLPretzelLocatableChains}, or towards one of the triangles $ A $ and $ B $ when he is in a chain ending in $ c $. Pushing him into either of those, without loss of generality into $ B $ via $ x $, will force him into $ N[y,z] $, from where capture is possible by Lemma \ref{SCLPretzelTrianglesCopGood}. \\
If the robber returns distance $ 2 $, $ 3 $, $ 4 $ or $ 5 $ upon the first probe, the set of his possible locations afterwards is a little bit more complicated, and results in four disconnected components. Probing in one of those appropriately, similar to how it was done when the robber's first reply was $ 6 $, the cop can reduce the number of components in which the robber could end up to $ 1 $. Noting that this one component will always be a subset of a chain of diamonds, Lemmas \ref{SCLDiamondChainPush} and \ref{SCLPretzelTrianglesCopGood} together with Corollary \ref{SCLPretzelLocatableChains}, allow the cop to locate the robber similar to the case for distance $ 6 $. \\
If the robber hides in the neighbourhood of $ c $ on the first probe, i.e. returns distance $ 1 $, the cop's next probe can be done at $ r $, allowing him to determine whether the robber is in the chain of diamonds containing $ r $ and, if so, pushing him towards $ A $. If not, the next probe at $ c $ makes sure that he does not reenter $ c $ and thus stays away from this chain - if he does not reply $ 1 $ to this probe, we arrive at a strictly easier situation than one already dealt with. If he once again returns $ 1 $, a probe at the mirror image of $ r $ on the opposite chain of diamonds, combined with a following probe at $ c $ again, allows the cop to either force him into that chain or rule out his presence in it in exactly the same way. The last probe at $ c $ either return $ 1 $ or a distance greater than $ 1 $, the latter again leading to situations strictly easier than ones already dealt with. But if the robber returns $ 1 $ after this last probe, the cop can only spread to a geodesic path, a probe at one end of which will locate him. \\
This finishes the cop's strategy and allows for capture in every possible situation. 
\end{proof}

\section{Conclusion and Open Problems}
\label{SCLOpenProblemsSection}

We will conclude this paper with a few open problems. Let us start by posing an open problem regarding the robber's winning strategy for results showing that a certain graph $ H $ is a hideout graph, which is the main type of result in section \ref{SCLForbiddenSubgraphsDiam2Section} and partially section \ref{SCLColourabilitySection}. In all those proofs, as outlined repeatedly in the induction steps, the robber restricts himself to movement on the subgraph $ H $. It is not clear, though, that this strategy does always work for these kind of results. 
As this question strikes us a very interesting and important one, we will state it as an open problem.

\begin{Open}
\label{SCLHideoutsClosedQuestion}
Let $ H $ be a graph, such that no graph containing $ H $ as an (induced) subgraph is locatable. Does there always exist a winning strategy for the robber where he does not leave $ H $?
\end{Open}

We do believe that, should graphs $ H $ exist for which this strategy does not work, the proof that any graph containing $ H $ is non-locatable would have to be non-constructive. This is because if the robbers winning strategy requires him to leave $ H $, we have no chance of laying out his general winning strategy, as we have no knowledge of the structure in $ G $ outside $ H $. On the other hand, it might be the case that the strategy of restricting himself to $ H $ will always work for the robber. 

Another area that deserves further attention is that of finding additional hideout graphs, or weak hideout graphs for that matter, ideally giving a full characteration.

\begin{Open}
\label{SCLHideoutCharacterisationQuestion}
Find a complete characterisation of all hideout graphs, respectively weak hideout graphs.
\end{Open}

Failing this, one can ask simpler questions that can then pave the way towards a deeper understanding of hideout graphs. For example, it would be interesting to figure out how common -- or rare -- hideout graphs are. Here, we are interested in minimal hideout or weak hideout graphs, i.e. those for which any subgraph is locatable. We have already described one infinite such family of weak hideout graphs, the sunlets, in section \ref{SCLNoForbiddenCharacterisationSection}. We mention here another one infinite family, consisting of minimal hideout graphs and thus being the first such family. These are the diamond rings -- the diamond ring of order $ n $ consisting of a cycle $ C_{n} $ with each edge replaced by a diamond. The arguments from section \ref{SCLColourabilitySection} can be used to show non-locatability and minimality, and the hideout property is easily deduced by showing that whenever a graph contains a diamond ring the robber can still follow the same strategy directly in the diamond ring itself.

Finally, we have noted above in Corollary \ref{SCLRandomCubic} that random cubic graphs with high probability are non-locatable. It is thus interesting to ask whether we can restrict the family of cubic graphs sufficiently such that a random graph sampled from the remaining family of graphs asymptotically has a positive probability of being locatable. 

\begin{Open}
\label{SCLCubicSubfamilyLocatableQuestion}
Find a (sensibly) restricted subfamily of the family of cubic graphs, the members of which are locatable with positive probabiliy.
\end{Open}

\section{Acknowledgements}
\label{SLCAcknowledgementsSection}
The first author acknowledges support through funding from NSF grant DMS~1301614 and MULTIPLEX grant no. 317532, and is grateful to the organisers of the $8^\text{th}$ Graduate Student Combinatorics Conference at the University of Illinois at Urbana-Champaign for drawing his attention to the problem. The second author acknowledges support through funding from the European Union under grant EP/J500380/1 as well as from the Studienstiftung des Deutschen Volkes. The authors would also like to thank Yuval Peres and the Theory Group at Microsoft Research Redmond for hosting them while some of this research was conducted.

\end{document}